\documentclass{amsart}
\usepackage[utf8]{inputenc}

\usepackage[margin=1in]{geometry}
\usepackage[shortlabels]{enumitem}
\usepackage{amsmath,amscd,amsthm,amsfonts,amssymb}
\usepackage{eucal}
\usepackage{mathrsfs}
\usepackage{float}
\usepackage{tikz-cd, comment, fancyvrb}
\usepackage{todonotes}
\usepackage{xcolor}

\RequirePackage{color}
\definecolor{myred}{rgb}{0,0,0}
\definecolor{mygreen}{rgb}{0,0.5,0}
\definecolor{myblue}{rgb}{0,0,0.65}

\usepackage{hyperref}
\hypersetup{citecolor=blue}
\usepackage{tikz}
\usetikzlibrary{matrix,arrows,decorations.pathmorphing}

\theoremstyle{plain}
\newtheorem{theorem}{Theorem}[section]

\newtheorem{corollary}[theorem]{Corollary}

\newtheorem{lemma}[theorem]{Lemma}
\newtheorem*{lemma*}{Lemma}
\newtheorem*{proposition*}{Proposition}

\newtheorem*{truefact*}{Fact}

\theoremstyle{definition}

\theoremstyle{remark}
\newtheorem*{remark}{Remark}

\newcommand{\bb}[1]{\expandafter\newcommand\expandafter{\csname #1\endcsname}{{\mathbb {#1}}}} 

\bb C
\bb R
\bb Q
\bb Z
\bb N

\bb F 
\newcommand{\dx}{\mathrm{d}x}
\newcommand{\Irr}{\mathrm{Irr}}
\newcommand{\Gal}{\mathrm{Gal}}
\newcommand{\e}{\mathrm{e}}
\newcommand{\bchi}{\boldsymbol{\chi}} 
\renewcommand{\k}{\mathbf k}

\title{Lower bounds on weighted moments of primes in short intervals in number fields}
\author{R\'egis de la Bret\`eche}
\address{
Institut de Math\'ematiques de Jussieu-Paris Rive Gauche\\
Universit\'e de Paris, Sorbonne Universit\'e, CNRS UMR 7586\\
Case Postale 7012\\
F-75251 Paris CEDEX 13\\ France}
\email{regis.delabreteche@imj-prg.fr}

\author{Vivian Kuperberg}
\address{
Department of Mathematics \\ School of Mathematical Sciences \\ Tel Aviv University \\ P.O. Box 39040 \\ Ramat Aviv, Tel Aviv 69978 \\ Israel}
\email{vivkuperberg@tauex.tau.ac.il}
\thanks{The authors would like to thank Florent Jouve and Brad Rodgers for helpful comments. The second author is supported by the NSF Mathematical Sciences Research Program through the grant DMS-2202128.}
\date{}

\begin{document}

\maketitle

\begin{abstract}
 We consider an analog of a conjecture of Montgomery and Soundararajan on the moments of primes in short intervals in number fields; this analog was discussed and heuristically derived in a paper of the second author, Rodgers, and Roditty-Gershon. Adapting work of the first author and Fiorilli in the integer case, we establish lower bounds on a weighted version of these moments which agree with the conjectured values.
\end{abstract}

\section{Introduction}

Goldston and Montgomery \cite{MR1018376-goldston-montgomery} conjectured that for $\delta \in (0,1)$, as $X \to \infty$, the variance of the number of primes in short intervals of width $\delta X$ is given by
\begin{equation}\label{eq:goldston-montgomery-conjecture}
    \frac 1X \int_X^{2X} \big(\psi(x+\delta X) - \psi(x)  - \delta X\big)^2\mathrm{d}x \sim \delta X \log(\delta^{-1}),
\end{equation}
where $\psi(x) := \sum_{n \le x} \Lambda(n)$ is the Chebyshev function. This conjecture was extended by Montgomery and Soundararajan \cite{MR2104891-montgomery-sound} when they conjectured that the distribution of primes in short intervals is Gaussian. More precisely, they conjectured that for fixed $\varepsilon > 0$ and $n \in \mathbb N$, uniformly for $\frac{(\log X)^{1+\varepsilon}}{X} \le \delta \le \frac{1}{X^{\varepsilon}}$, the $n$th moment of this distribution is
\begin{equation*}
    \frac 1X \int_{X}^{2X} \frac{(\psi(x+\delta X)-\psi(x)-\delta X)^{n}}{X^{n/2}} \mathrm{d}x = (\mu_n + o(1))\big(\delta \log(\delta^{-1})\big)^{n/2},
\end{equation*}
where $\mu_n$ are the Gaussian moment constants: $\mu_n = 1 \cdot 3 \cdots (n-1)$ if $n$ is even and $\mu_n = 0$ if $n$ is odd. 

Montgomery and Soundararajan showed that their conjecture follows from a strong form of the Hardy--Littlewood $k$-tuples conjecture. By using similar, albeit heuristic, reasoning building off of Gross and Smith's~\cite{MR1763807-gross-smith-hardy-littlewood} generalization of the Hardy--Littlewood conjecture to number fields, the second author along with Rodgers and Roditty-Gershon \cite{MR4421937-kuperberg-rodgers-roditty-gershon} derived an analog of Goldston and Montgomery's conjecture in the number field setting. They consider a broad generalization of short intervals: for a fixed number field $K/\Q$, an embedding $m:K\rightarrow \mathbb R^n$, and a norm $\|\cdot\|$ on $\mathbb R^n$, for parameters $x\in \mathbb R^n$ and $H \in \mathbb R$, they consider the number of primes $\alpha \in K$ with $\|m(\alpha)-x\| \le H$. They conjecture that the variance of the number of primes in short intervals in $K$ is asymptotically proportional to $(1-\delta)$ times the expected number of primes, where~$H = X^\delta$ and $x$ varies over points $\|x\| \le X$; the conjecture for the variance in this setting is identical to the analogous conjecture in the integer setting.

{\color{black}  As a natural related question, one can consider prime ideals with norms lying in short intervals, that is to say, small sets of prime ideals determined by their norm. } In this setting, one might expect the analog of Goldston and Montgomery's conjecture to be
\begin{equation}\label{eq:goldston-montgomery-conjecture-in-number-fields}
    \frac 1X \int_X^{2X} \big(\psi_K(x+\delta X) - \psi_K(x) - \delta X\big)^2\mathrm{d}x \sim C_K \delta X \log (\delta^{-1}),
\end{equation}
where~$C_K$ is a constant depending only on $K/\Q$. This is identical to \eqref{eq:goldston-montgomery-conjecture} except that the prime-counting function $\psi_K(x)$ is given by $\psi_K(x) := \sum_{\substack{\mathfrak n \subset K \\ N_K(\mathfrak n) \le x}} \Lambda_K(\mathfrak n),$ where $\Lambda_K(\mathfrak n)$ are the coefficients of $-\frac{\zeta_K'(s)}{\zeta_K(s)}$ and where $\zeta_K(s)$ is the Dedekind zeta function.

In \cite{MR4322621-conjecture-Montgomery-Soundararajan-integers}, the first author and Fiorilli proved a lower bound, dependent on the Riemann Hypothesis, for a weighted version of Montgomery and Soundararajan's conjecture. For fixed $\kappa > 0$ and an even differentiable test function $\eta:\mathbb R \to \mathbb R$ such that $\eta(t),\eta'(t) \ll \e^{-\kappa t}$, they define 
\begin{equation*}
    \psi_{\eta}(x,\delta) := \sum_{n \ge 1} \frac{\Lambda(n)}{n^{1/2}} \eta\left(\delta^{-1} \log \left(\frac nx \right) \right).
\end{equation*}
{\color{black}We observe that if $\eta$ is chosen to be a smooth approximation of $\eta_0(t)=\e^{\delta t}1_{[-1,1]}$, we have
$$\psi_{\eta}(x,\delta)\approx \psi_{\eta_0}(x,\delta)=\frac{\psi(x\e^{\delta})-\psi(x\e^{-\delta})}{\sqrt{x} } ,$$
noting that the function $\eta_0$ itself does not satisfy our hypothesis.}

For $\delta > 0$ and a non-trivial even integrable function $\Phi: \mathbb R \to \mathbb R$ with $\Phi,$ $\widehat\Phi \ge 0$, they consider a weighted $n$-th moment
\begin{equation*}
    M_{n}(X,\delta;\eta,\Phi):=\frac 1{(\log X) \int_0^\infty \Phi} \int_1^\infty \Phi\left(\frac{\log x}{\log X}\right)\big(\psi_\eta(x,\delta) - x^{\tfrac 12} \delta \mathcal L_\eta(\tfrac{\delta}{2})\big)^n\frac{\mathrm{d}x}{x},
\end{equation*}
where $x^{\tfrac 12}\delta\mathcal L_\eta(\frac{\delta}{2})$ is an explicit function defined \eqref{eq:def-of-L-eta} to the expected main term of $\psi_\eta(x,\delta)$. In particular, they show for even moments that there exists $C_\eta>0$ such that 
\begin{align*}
M_{2n}(X,\delta;\eta,\Phi) \ge \mu_{2n} \delta^n \left(\alpha(\eta)\log(\delta^{-1}) + \beta(\eta)\right)^n& \left(1 + O_{\kappa,\eta} \left(\frac{n^2\delta}{\log(\delta^{-1} + 2)}\right)\right) 
+ O_\Phi\left(\delta\frac{(C_\eta \log(\delta^{-1} + 2))^{2n}}{\log X}\right).
\end{align*}
Here $\alpha$ and $\beta$ are defined as
\begin{equation}\label{def:alphabeta}
  \alpha(\eta)=\int_{\mathbb R} \widehat \eta^2(\xi){\rm d}\xi=\int_{\mathbb R}   \eta^2(\xi){\rm d}\xi,\qquad \beta(\eta)=\int_{\mathbb R} \widehat  \eta^2(\xi)\log |\xi|{\rm d}\xi.  
\end{equation} where,  for $\eta \in \mathcal L^1(\mathbb R)$, we recall the usual definition of the Fourier transform
$$\widehat \eta(\xi) := \int_{\R} \eta(t)\e^{-2\pi i \xi t}  {\rm d} t. $$

Their proof relies on a positivity argument in the explicit formula for the Riemann zeta function; since they consider an expansion with only positive terms, they can discard any ``off-diagonal'' terms without endangering the lower bound. For example, instead of assuming that the zeroes of $\zeta$ are simple, this technique allows one to freely discard any terms arising from possibly multiplicities of the zeroes.

The goal of this work is to extend the results of \cite{MR4322621-conjecture-Montgomery-Soundararajan-integers} to all extensions of $\mathbb Q$, just as the results of \cite{MR4421937-kuperberg-rodgers-roditty-gershon} extend the conjecture of Goldston and Montgomery. Many of the ideas are very similar to those of \cite{MR4322621-conjecture-Montgomery-Soundararajan-integers}, showing the flexibility of their technique. {\color{black}Despite the weighted averages $\psi_\eta(x,\delta)$ used in \cite{MR4322621-conjecture-Montgomery-Soundararajan-integers}, their results allow them to prove unconditional lower bounds on counts of primes in short intervals in integers. In the same way, our results on the weighted functions $\psi_{\eta,K}(x,\delta)$ imply unconditional lower bounds on counts of prime ideals with norms lying in a short interval. These derivations are identical to the arguments in \cite{MR4322621-conjecture-Montgomery-Soundararajan-integers}, so we omit them.}

Throughout, fix a finite Galois extension $K/\Q$. Denote by $\mathcal O_K$ the ring of integers of $K$, and for an ideal~$\mathfrak n \subset \mathcal O_K$, let $N_K \mathfrak n$ denote the absolute norm of the ideal $\mathfrak n$, so that $N_K\mathfrak n = [\mathcal O_K:\mathfrak n]$.

For fixed $\kappa > 0$, define $\mathcal E_\kappa \subset \mathcal L^1(\mathbb R)$ to be the set of differentiable even test functions $\eta:\R \to \R$ such that~$\widehat{\eta}(0) > 0$, such that for all $t \in \R$,
 \begin{equation*}
    \label{bound eta}
\eta(t),\eta'(t) \ll \e^{-\kappa|t|},\end{equation*}
and such that for all $\xi \in \R$,
\[0 \le \widehat{\eta}(\xi) \ll (|\xi|+1)^{-1}\log(|\xi|+2)^{-2-\kappa}.\]
This is the same class of test functions as defined in \cite{MR4322621-conjecture-Montgomery-Soundararajan-integers}. For $\eta \in \mathcal E_\kappa$ and $\delta < 2\kappa$, define
\[\psi_{\eta,K}(x,\delta):=\sum_{\substack{\mathfrak n \subset \mathcal O_K\\ \mathfrak n \text{ ideal}}} \frac{\Lambda_K(\mathfrak n)}{N_K\mathfrak n^{1/2}} \eta\Big(\delta^{-1}\log\Big(\frac {N_K\mathfrak n}x \Big)\Big),\]
where $\Lambda_K$ is defined by
$$\sum_{\substack{\mathfrak n \subset \mathcal O_K\\ \mathfrak n \text{ ideal}}}\frac{\Lambda_K(\mathfrak n)}{N_K\mathfrak n^s}=-\frac{\zeta'_K}{\zeta_K}(s).$$ 
{\color{black}In the case when $K=\Q$, the quantity $\psi_{\eta,K}(x,\delta)$ is precisely the quantity $\psi_\eta(x,\delta)$ defined in \cite{MR4322621-conjecture-Montgomery-Soundararajan-integers}.}

We consider the $n$th moment
\begin{equation}\label{eq:weighted-nth-moment-definition}
M_{n,K}(X,\delta;\eta,\Phi) := \frac 1{(\log X)\int_0^\infty \Phi}\int_1^\infty \Phi\Big(\frac{\log x}{\log X}\Big) \Big(\psi_{\eta,K}(x,\delta)-\delta x^{1/2}\mathcal L_{\eta}(\delta/2)+\delta\widehat \eta(0) {\rm ord}_{s=\frac 12}\zeta_K(s)\Big)^n \frac{\mathrm{d}x}{x},
\end{equation}
where $\mathcal L_\eta$ is the expected main term for $\psi_{\eta,K}(x,\delta)$, given by
\begin{equation}\label{eq:def-of-L-eta}
\int_0^\infty \frac{\eta(\delta^{-1}\log(\tfrac tx ))}{t^{\tfrac 12}} \mathrm{d}t = x^{\tfrac 12} \delta \int_{\mathbb R} \e^{\tfrac{\delta w}2} \eta(w)\mathrm{d}w =: x^{\tfrac 12}\delta \mathcal L_\eta(\tfrac \delta 2).
\end{equation}
The second subtracted term, $-\delta \hat{\eta}(0) \mathrm{ord}_{s=\tfrac 12}\zeta_K(s)$, is the contribution from a zero of $\zeta_K(s)$ at the central point~$s = \tfrac 12$, if any such zero exists. In many cases, it is expected that no such zero exists: for example, if the Galois group of $K/\mathbb Q$ is abelian, then the Dedekind zeta function is a product of Dirichlet $L$-functions, which are conjectured to never vanish at the central value (see for example \cite{MR2978845-bui-non-vanishing} where it is shown that 34.1\% of Dirichlet $L$-functions are non-vanishing at the central point). However, there are some extensions where~$\zeta_K(\tfrac 12) = 0$; see \cite{MR291122-armitage-central-zeroes} and \cite{MR4433141-kandhil} for a construction and further discussion. Vanishing of the Dedekind zeta function at the central point is a key counterexample to the philosophy that the nontrivial zeroes of~$\zeta_K(s)$ should be linearly independent, which is why we separate this contribution.

The function $\psi_{\eta,K}(x,\delta)$ morally counts powers of prime ideals $\mathfrak n$ with $N_K\mathfrak n$ lying in the interval $[x(1-O(\delta)),x(1+O(\delta))]$; for prime ideals in this interval, the weight $(N_K\mathfrak n)^{-1/2}$ is $x^{-1/2}(1+O(\delta))$. Based on~\eqref{eq:goldston-montgomery-conjecture-in-number-fields}, the analog of the conjecture of Goldston and Montgomery in number fields explored in  \cite{MR4421937-kuperberg-rodgers-roditty-gershon}, one might expect~$M_{2,K}(X,\delta;\eta,\Phi)$ to have a main term $\delta \log (\delta^{-1})$ (in terms of $\delta$), with some additional factors relating to the choice of test function $\eta$ and the field extension $K/\mathbb Q$. 

Before stating our main result, we introduce some notation relating to the field extension $K/\Q$. Let $n_K$ denote the degree $[K:\Q]$, let $\Delta_K$ denote the absolute discriminant of $K$, and let ${\rm rd}_K$ denote the root discriminant defined by ${\rm rd}_K := \Delta_K^{1/n_K}$.
For $j\geq 1$ and a finite group $G$, we define the character sum
\begin{equation}\label{def:lambda}
\lambda_j(G):=\sum_{\chi\in\Irr(G)}\chi(1)^j. 
\end{equation}
In our applications, $G$ will always be the Galois group $\Gal(K/\Q)$. For a Galois group $G = \Gal(K/\Q)$, the degree $n_K$ satisfies $n_K = \sum_{\chi\in\Irr(\Gal(K/\Q))} \chi(1)^2 = \lambda_2(G)$. Moreover, $\lambda_3(G)/{  n_K }\geq 1,$ and $\lambda_5(G) \le   \lambda_3(G)^2$.

We also define $q_j(K)$ and $z_j(K)$, two additional sums over characters of the Galois group, which will arise in our computations. For $\mathfrak f(K/\Q,\chi)$ the Artin conductor of $\chi$, let $q_j(K)$ and $z_j(K)$ be given by
\begin{equation}\label{def:qz}\begin{split}
 q_j(K)&:= \sum_{\chi\in\Irr(\Gal(K/\Q))}\chi(1)^{j-1} \log\big( \mathfrak f(K/\mathbb Q,\chi)2^{-2r_{2,K}\chi(1)}\big),\cr
 z(K,\chi)&:= {\rm ord}_{s=\frac 12}L(s,K,\chi),
 \cr z_j(K )&:= \sum_{\chi\in\Irr(\Gal(K/\Q))}\chi(1)^{j-1}z(K/\Q,\chi)=\sum_{\chi\in\Irr(\Gal(K/\Q))}\chi(1)^{j-1}{\rm ord}_{s=\frac 12}L(s,K/\Q,\chi).  
\end{split} 
\end{equation}
We will show in \eqref{encadrementj}
 that $   z_j(K)\ll q_j(K)\leq 2\lambda_j(G)  \log ({\rm rd}_K). $

With these definitions in hand, we are ready to state our main result.

\begin{theorem}\label{thm:main-theorem}
Let $K/\Q$ be a Galois extension of number fields with $G:=\Gal(K/\Q)$. Assume the Artin holomorphy conjecture and GRH. Let $0<\kappa<\tfrac 12$ and $\eta \in \mathcal E_\kappa$.
There exist positive constants $c_\eta$ and $C_\eta$, depending only on $\eta$, as well as a function $V(K;\eta,\delta)$ satisfying 
$$
V(K;\eta,\delta)  =  {\delta}\Big(
\lambda_{3}(G)\big(\alpha(\eta) \log(\delta^{-1})+ \beta(\eta)\big)+\alpha(\eta) q_{3}(K)-\alpha(\eta)z_{3}(K) \Big) +O_\eta\big( \lambda_{3}(G) \delta ^2 \big) $$ such that for $\delta\ll {\rm rd}_K^{-c_\eta}$ we have uniformly in $n\geq 1$
\begin{align*}
(-1)^nM_{n,K}(X, \delta; \eta,\Phi) \geq &   \mu_{n}V(K;\eta,\delta)^{n/2} \Big\{1+O\Big(\frac{n^2 n!\delta}{\log (1/\delta)}\frac{\lambda_5(G)}{ \lambda_3(G)^2}\Big)\Big\}\cr&+
O\Big(   V(K;\eta,\delta)^{n/2} \Big( \frac{C_{\eta}n_K^2 \log(\delta^{-1} )}{\lambda_3(G)} \Big)^{n/2}\frac{ \delta^{1-n/2} }{\log X} \Big).
\end{align*}
\par In particular, for  fixed $\eta$, $M_{2,K}(X, \delta; \eta,\Phi) \geq  V(K;\eta,\delta)(1+o(1))  $ whenever
$\log (1/\delta)=o(\lambda_3(G)(\log X) ).$ 
For fixed $m\geq 2$ and fixed $\eta$, 
$M_{2m,K}(X, \delta; \eta,\Phi) \geq \mu_{2m}V(K;\eta,\delta)^m (1+o(1)) $ whenever
\begin{equation}\label{eq:main-theorem-delta-bounds}
\delta^{-1 } =o\Big(\frac{\lambda_3(G)^{m/(m-1)}(\log X)^{1/(m-1)}}{  n_K^{2/(m-1)} (\log(\lambda_3(G)\log X))^{m/(m-1)} }\Big).
\end{equation} 
\end{theorem}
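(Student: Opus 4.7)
The plan is to adapt the positivity-based technique of \cite{MR4322621-conjecture-Montgomery-Soundararajan-integers} to the number-field setting via the Artin factorization
\[\zeta_K(s)=\prod_{\chi\in\Irr(G)}L(s,K/\Q,\chi)^{\chi(1)},\]
valid under the Artin holomorphy conjecture. First I would apply the explicit formula: by Mellin inversion of $-\zeta_K'/\zeta_K$ against the Fourier transform of $\eta$ and shifting the contour to the critical line (legitimate under GRH), the centered quantity
\[\mathcal E(x,\delta):=\psi_{\eta,K}(x,\delta)-\delta x^{1/2}\mathcal L_\eta(\delta/2)+\delta\widehat\eta(0)\,\mathrm{ord}_{s=\frac12}\zeta_K(s)\]
takes the form
\[\mathcal E(x,\delta)=-\delta\sum_{\chi\in\Irr(G)}\chi(1)\sum_{\gamma\ne 0}x^{i\gamma}\widehat\eta\Big(\frac{\delta\gamma}{2\pi}\Big)+R(x,\delta),\]
where the inner sum runs over the imaginary parts of nontrivial zeros of $L(s,K/\Q,\chi)$ other than the central point, and $R(x,\delta)$ gathers the trivial-zero and archimedean contributions, controlled uniformly by the decay of $\eta$ and by $\log\mathrm{rd}_K$.

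Next I would raise $\mathcal E(x,\delta)$ to the $n$-th power and integrate against $\Phi(\log x/\log X)\,\mathrm{d}x/x$. The $x$-integral of $x^{i(\gamma_1+\cdots+\gamma_n)}$ yields a normalized $\widehat\Phi$-factor peaked near $\gamma_1+\cdots+\gamma_n=0$. Because $\widehat\eta\geq 0$ and $\widehat\Phi\geq 0$, every term in the expansion shares the sign of $(-\delta)^n$, so in bounding $(-1)^n M_{n,K}$ from below I may freely discard any nonnegative subset. The ``diagonal'' I retain consists of the perfect matchings of $\{1,\dots,n\}$ into conjugate pairs $(\gamma_i,\gamma_j)=(\gamma,-\gamma)$ with both zeros drawn from the \emph{same} character $\chi$ and assumed distinct---this cleanly avoids any bookkeeping of zero multiplicities or accidental coincidences across different characters. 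Counting the $\mu_n=1\cdot 3\cdots(n-1)$ such matchings produces
\[(-1)^n M_{n,K}(X,\delta;\eta,\Phi)\;\ge\;\mu_n\Big(\delta^2\sum_{\chi\in\Irr(G)}\chi(1)^2\sum_\gamma\widehat\eta\big(\tfrac{\delta\gamma}{2\pi}\big)^2\Big)^{n/2}-(\text{tail errors}).\]

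I would then evaluate the inner variance using the Riemann--von Mangoldt formula for $L(s,K/\Q,\chi)$ under GRH, whose local density of zeros at height $\gamma=2\pi u/\delta$ is $\tfrac{1}{\pi}\bigl(\log\mathfrak f(K/\Q,\chi)-2r_{2,K}\chi(1)\log 2+\chi(1)\log|2\pi u/\delta|\bigr)+O(1)$. Partial summation, multiplication by $\chi(1)^2$, summation over $\chi$, and invocation of $\lambda_3(G)=\sum_\chi\chi(1)^3$ together with \eqref{def:alphabeta} and \eqref{def:qz} reassemble the bracket precisely into $V(K;\eta,\delta)$ up to $O_\eta(\lambda_3(G)\delta^2)$, with the contribution of the removed central-point zero accounting for the $-\alpha(\eta)z_3(K)$ term. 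The tail errors split into two kinds: the off-diagonal $\widehat\Phi$-tail produces the additive $(\log X)^{-1}$ term in the statement, while non-pair partitions of $\{1,\dots,n\}$ contribute the relative error $n^2 n!\,\delta\,\lambda_5(G)/(\lambda_3(G)^2\log(1/\delta))$ via Cauchy--Schwarz and the bound $\lambda_5(G)\le\lambda_3(G)^2$.

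The main obstacle I expect is maintaining simultaneous uniformity in $K$, $n$, and $\delta$: the remainder $R(x,\delta)$ is governed by $\log\mathrm{rd}_K$, and raising $\mathcal E$ to the $n$-th power forces both $R^n$-type contributions and the non-diagonal combinatorial pieces to remain small compared to $V(K;\eta,\delta)^{n/2}$---this is precisely the role of the hypothesis $\delta\ll\mathrm{rd}_K^{-c_\eta}$. Executing this bookkeeping while preserving the combinatorial constant $\mu_n$ and packaging the $K$-dependence exactly as $V(K;\eta,\delta)$ is the technical heart of the argument.
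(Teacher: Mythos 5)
Your overall scaffolding is the paper's: apply the explicit formula, expand the $n$-th power, use positivity of $\widehat\eta$ and $\widehat\Phi$ to discard everything but the conjugate-pairing diagonal, and then estimate that diagonal. However, the combinatorial step, which is the technical heart, has genuine gaps in your sketch.

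\textbf{The pairing count is not simply $\mu_n$, and the error is misattributed.} Counting $\mu_n$ perfect matchings, each worth $\big(\delta^2\sum_\chi\chi(1)^2\sum_\gamma\widehat\eta^2\big)^{n/2}$, gives an \emph{upper} bound on the pairing subsum, not a lower bound: a tuple of zeros whose entries are not pairwise distinct up to sign is represented by more than one matching, so the matching sum overcounts. You say ``assumed distinct'' to dodge this, but then the diagonal sum is no longer $\nu_2(K;\eta,\delta)^{n/2}$; you must control the gap between the unrestricted $\ell$-fold sum and the distinct-tuples sum, and that gap is precisely the source of the relative error. In the paper this is Lemma~\ref{lemma pre combinatoire} (imported from \cite{dfjouve-general-class-fcns}), whose conclusion has the shape $\ell!\,b_0^\ell - C_\eta\ell!^2\ell(\ell-1)b_0^{\ell-1}$: the subtracted term is the overcount repair, and summing these corrections over the block sizes $k_j$ is what yields the factor $O\big(m^2 m!\,\nu_4(K;\eta,\delta)/\nu_2(K;\eta,\delta)^2\big)$ in Lemma~\ref{lemma S2mgeq}. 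By contrast, your claim that ``non-pair partitions of $\{1,\dots,n\}$ contribute the relative error \dots\ via Cauchy--Schwarz'' is incorrect: by positivity non-pair partitions are simply \emph{discarded} from the lower bound and contribute nothing to the error. The error is internal to the pairing diagonal.

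\textbf{``Both zeros drawn from the same character $\chi$'' fails for complex characters.} If $\chi\ne\overline\chi$ and $\gamma$ is a zero of $L(s,\chi)$, then (under GRH and the functional equation) $-\gamma$ is a zero of $L(s,\overline\chi)$, not of $L(s,\chi)$; so you cannot form conjugate pairs within a single complex character. The paper pairs within the orbits $\{\psi,\overline\psi\}$ and stratifies by the Frobenius--Schur indicator, because the pairing constants for unitary versus orthogonal/symplectic $\psi$ differ (cf.\ the $\ell!$ versus $2^{-\ell}\ell!$ in the two cases of Lemma~\ref{lemma pre combinatoire}), and it is only after combining with the $\binom{2k_j}{k_j}$ sign-pattern factor in the stratified sum $D(\mathbf k)$ that the Gaussian constant $\mu_{2m}$ emerges. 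Without this stratification your constants would not assemble correctly. Your proposed evaluation of $\sum_\gamma\widehat\eta(\delta\gamma/2\pi)^2$ via Riemann--von~Mangoldt and partial summation is morally the same as the paper's device of re-inserting $h=\widehat\eta^{\,2}$ into Weil's explicit formula (Lemma~\ref{lemma3.4}), so that part is fine; but the combinatorial lemma needs to be stated and proved (or imported and cited), and the error term needs the correct provenance.
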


In the range when $\delta$ satisfies \eqref{eq:main-theorem-delta-bounds}, the lower bound $M_{2m,K}(X,\delta;\eta,\Phi) \geq \mu_{2m}V(K;\eta,\delta)^m(1+o(1))$ has a main term of size $(\delta \log(\delta^{-1}))^m$; this lower bound is consistent both with the Goldston--Montgomery conjectures in this setting and with the predictions of Montgomery and Soundararajan that the distribution is normal. This lower bound on the variance indicates that the constant $C_K$ in \eqref{eq:goldston-montgomery-conjecture-in-number-fields} is given by $  \lambda_3(G)$; the factor $\lambda_3(G)$ comes from the multiplicity of the zeroes of the Dedekind zeta function when the Galois group $G$ is nonabelian. In the case when $G$ is abelian, $\lambda_3(G) = n_K$, so the constant is simply the degree of the extension. However, the presence of a more complicated character sum in the nonabelian case is a notable aspect of Theorem \ref{thm:main-theorem}.
When $n=2m+1$ is odd, Theorem \ref{thm:main-theorem} shows that
\begin{equation*}
 M_{2m+1,K}(X,\delta;\eta,\Phi) \le O\left(V(K;\eta,\delta)^{n/2} \left(\frac{C_\eta \log(\delta^{-1})}{\lambda_3(G)}\right)^{n/2} \frac{\delta^{1-n/2}}{\log X}\right).
\end{equation*}

In particular, the upper bound for $|M_{2m+1,K}|^{1/(2m+1)}$ is smaller than the lower bound for $M_{2m,K}^{1/2m}$; this is consistent with the fact that the odd moments of a normal distribution are zero. However, Theorem \ref{thm:main-theorem} does not rule out the possibility that the odd moments are large and negative.

The condition \eqref{eq:main-theorem-delta-bounds} on the size of $\delta$ is fairly restrictive; Theorem \ref{thm:main-theorem} is meaningful when $\delta$ is reasonably close to $1$. In this range, the expected main term for $\psi_{\eta,K}(x,\delta)$ is large, so that Theorem \ref{thm:main-theorem} implies lower bounds not only for the asymptotic main term but also for second-order terms.

The proof of Theorem \ref{thm:main-theorem} follows closely the ideas of \cite{MR4322621-conjecture-Montgomery-Soundararajan-integers}. The key difference arising in this setting is that general Dedekind zeta functions can have zeroes with multiplicity. As we discuss further in 
Section \ref{sec:number-fields-background}, each zero of the Dedekind zeta function $\zeta_K(s)$ of a Galois extension $K/\Q$ is a zero of some Artin $L$-function of a character $\chi$ of the Galois group. Assuming Artin's holomorphy conjecture as well as independence and simplicity of zeroes for Artin $L$-functions, the multiplicity of each zero is expected to be precisely equal to the dimension of the character  $\chi$. A lower bound of the correct order of magnitude must therefore take into account the expected multiplicity of the zeroes of $\zeta_K(s)$. Thus, our estimates incorporate more involved combinatorial arguments in order to handle zeroes with higher multiplicity.

 Our methods generalize to the relative setting of Galois extensions $L/K$, where the base field is not necessarily $\mathbb Q$. However, for clarity, we state our arguments in the case of an extension of $\mathbb Q$. This setting has the advantage that $\psi_{\eta,K}$ is straightforwardly a weighted count of prime ideals in $K$.

\section{Background and notation for Dedekind zeta functions}\label{sec:number-fields-background}

For a Galois extension $L/K$ of number fields,
\begin{equation}\label{eq:Dedekind-zeta-decomposition}
    \zeta_L(s) = \zeta_K(s) \prod_{\chi} L(s,\chi,L/K)^{\chi(1)
    },
\end{equation}
where the product is taken over all nontrivial irreducible characters $\chi$ of the Galois group $\mathrm{Gal}(L/K)$, and $L(s,\chi,L/K)$ is defined for $\Re e(s)>1$ by the Euler product
\begin{equation}\label{eq:Galois-L-function-definition}
    L(s,\chi,L/K) := \prod_{\substack{\mathfrak p \lhd \mathcal O_K \\ \mathfrak p \text{ prime}}} L_{\mathfrak p}(s,\chi), \quad (L_{\mathfrak p}(s,\chi) := \det(\mathrm{Id} - N_K\mathfrak p^{-s} \rho(\phi_{\mathfrak p})|_{V^{I_{\mathfrak p}}}, \mathfrak p \lhd \mathcal O_K \text{ prime}).
\end{equation}
Here $V^{I_{\mathfrak p}}$ is the subspace of the representation space $V$ which is invariant under the inertia group $I_{\mathfrak p}$; see \cite{MR0218327-zeta-and-L-functions}.

Artin's holomorphy conjecture (see for example \cite[Chapter VII, Section 10]{MR1697859-neukirch}) states that each Artin $L$-function $L(s,\chi,L/K)$ for a nontrivial character $\chi$ can be extended to an entire function, and when $\chi$ is trivial, $L(s,\chi,L/K)$ is entire except for a simple pole at $s=1$. Assuming Artin's conjecture, formula \eqref{eq:Dedekind-zeta-decomposition} implies that the zeroes of $\zeta_L(s)$ correspond to zeroes of $\zeta_K(s)$ or zeroes of some $L(s,\chi,L/K)^{  \chi(1)}$, and the multiplicity of a zero (assuming independence and simplicity of zeroes) is governed by the dimension of the corresponding representation of $\mathrm{Gal}(L/K)$.

For an Artin $L$-function $L(s,\chi,L/K)$, define the completed Artin $L$-function of the character $\chi$ via
\begin{equation}\label{defLambda}
\begin{split}
    \Lambda(s,\chi,L/K) := \big(s(s-1)\big)^{E_0(\chi)}&\mathfrak f(L/K,\chi)^{s/2}\left( \prod_{\mathfrak p|\infty \text{ real}}\Gamma_{\mathbb R}(s)^{n_+}\Gamma_{\mathbb R}(s+1)^{n_-} \right)\\
    &\times\Gamma_{\mathbb C}(s)^{\chi(1)r_{2,K}}L(s,\chi,L/K),
\end{split}\end{equation}
where
\begin{equation}
    \label{defGamma}
    \Gamma_{\mathbb R}(s)  := \pi^{-s/2}\Gamma\left(\frac s2\right), \quad
    \Gamma_{\mathbb C}(s)  := 2(2\pi)^{-s}\Gamma(s),  
\end{equation}
$r_{1,K}$ is the number of real places of $K$, $r_{2,K}$ is the number of complex places of $K$, and if $\phi_{\mathfrak P}$ denotes the distinguished generator of $\Gal(L_{\mathfrak P}/K_{\mathfrak p})$, then $n_+(\mathfrak p) := \frac{\chi(1) + \chi(\phi_{\mathfrak P})}{2}$ and $n_-(\mathfrak p) := \frac{\chi(1)-\chi(\phi_{\mathfrak P})}{2}$. Note that $n_+(\mathfrak p)$ and $n_-(\mathfrak p)$ do not depend on the choice of $\mathfrak P$, although they do depend on $\mathfrak p$. Finally, $\mathfrak f(L/K,\chi)$ is the Artin conductor of $\chi$; we denote by $\mathfrak f(L/K,\chi)$ both the ideal which is the Artin conductor and its norm. The exponent $E_0(\chi)$ is $0$ if $\chi$ is nontrivial, and $1$ if $\chi$ is trivial.
 
The relative discriminant $\Delta_{L/K}$ and the Artin conductors of the character $\chi$ are related via the Conductor-discriminant formula
\begin{equation}\label{conductor discriminant}
\Delta_{L/K}=\prod_{\chi\in\Irr(\Gal(L /K ))} \mathfrak f(L/K,\chi)^{\chi(1)}
.
\end{equation}
In the case of an extension of $K/\Q$, this formula becomes
\begin{equation*}
\log\big(\Delta_{K}\big)=\sum_{\chi\in\Irr(\Gal(K/\Q ))} \chi(1)\log\big(\mathfrak f(K/\Q,\chi)\big),  
\end{equation*}
where $\Delta_K$ is the absolute discriminant.

Finally, formula \eqref{eq:Dedekind-zeta-decomposition} implies the following identity concerning the order of vanishing of $\zeta_K$ at the central point:
\begin{equation*}
{\rm ord}_{s=\frac 12}\zeta_K(s)=\sum_{\chi\in\Irr(\Gal(K/\mathbb Q  ))} \chi(1) \ {\rm ord}_{s=\frac 12}L(s,K/\Q,\chi).\end{equation*}

\section{Weil's explicit formula and auxiliary estimates}\label{sec:auxiliary-lemmas}

For $\delta_0 > 0$, define ${\mathcal F}(\delta_0)$ to be the set of measurable functions $F(x)$ satisfying
\begin{equation}\label{defPhiF}
\int_{-\infty}^\infty \e^{(\tfrac 12 + \delta_0)2\pi|x|}|F(x)|\mathrm{d}x < \infty,
\end{equation}
\begin{equation*}
\int_{-\infty}^\infty \e^{(\tfrac 12 + \delta_0)2\pi|x|}|\mathrm{d}F(x)| < \infty,
\end{equation*}  and such that $F(x) = \frac 12(F(x^{-}) + F(x^{+}))$ for all $x$ and $F(x) + F(-x) = 2F(0) + O(|x|)$.
For $F\in {\mathcal F}(\delta_0)$ with $\delta_0 > 0$, define
\begin{equation*}
\Phi_F(s) = \int_{-\infty}^\infty F(x) \e^{-(s-1/2)2\pi x}\mathrm{d}x,
\end{equation*}
where $-\delta_0 < \Re e(s) < 1 + \delta_0$.

\begin{lemma}\label{lem:weil-explicit-Galois-L-functions}
Assume Artin's holomorphy conjecture. Let $F\in {\mathcal F}(\delta_0)$ with $\delta_0 > 0$ and $\Phi_F$ defined in~\eqref{defPhiF}.    Let~$L/K$ be a Galois extension of number fields, let $\chi$ be an irreducible character of the Galois group~$\Gal(L/K)$, and define $L(s,\chi,L/K)$ by \eqref{eq:Galois-L-function-definition}. Assume that $K$ has $r_{1,K}$ real embeddings and $r_{2,K}$ complex embeddings. Then
\begin{align*}
\sum_{\gamma}\Phi_F(\rho) = &E_0(\chi)(\Phi_F(0) + \Phi_F(1)) +  \frac {F(0)}{\pi}\Bigg[\log \big(\mathfrak f(L/K,\chi)^{1/2}\pi^{-r_{1,K}\chi(1)/2}(2\pi)^{-r_{2,K}\chi(1)}\big) \\
&+r_{2,K} \chi(1) \frac{\Gamma'}{\Gamma}\Big(\frac 12\Big) +\frac 12 \sum_{\mathfrak p|\infty \text{ real}} \Big(n_+(\mathfrak p) \frac{\Gamma'}{\Gamma}\Big(\frac 14\Big) + n_-(\mathfrak p)\frac{\Gamma'}{\Gamma}\Big(\frac 34 \Big)\Big)\Bigg] \\
&-\frac 1{2\pi} \sum_{\mathfrak p \subset \mathcal O_K} \sum_{m=1}^\infty \frac{\log N_K\mathfrak p}{(N_K\mathfrak p)^{m/2}}\left(\chi(\sigma_\mathfrak p^m)F\left(\frac{-m\log N_K\mathfrak p}{2\pi}\right) + \bar{\chi}(\sigma_{\mathfrak p}^m)F\left(\frac{m\log N_K\mathfrak p}{2\pi}\right)\right) \\
&+ \int_0^\infty \big(2F(0)-F( x)-F(-x)\big)\Big( \sum_{\mathfrak p|\infty \text{ real}}\frac{n_+(\mathfrak p)\e^{-\pi x}+n_-(\mathfrak p)\e^{-3\pi x}}{1-\e^{-4\pi x}} 
+  r_{2,K}\frac{\chi(1)\e^{- \pi x }}{1-\e^{-{2\pi x}}}\Big)\mathrm{d}x,
\end{align*}
where the sum on the left-hand side is taken over all nontrivial zeroes of $L(s,\chi,L/K)$. 
\end{lemma}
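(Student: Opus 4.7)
The plan is to derive the formula by applying a Mellin-type transform to the logarithmic derivative of the completed Artin $L$-function $\Lambda(s,\chi,L/K)$ from~\eqref{defLambda}, following the classical Weil explicit formula template. Under Artin's holomorphy conjecture, $\Lambda(s,\chi,L/K)$ is entire of order one (the $(s(s-1))^{E_0(\chi)}$ prefactor absorbs the poles at $s=0,1$ in the trivial case), so it admits a Hadamard product
\[
\Lambda(s,\chi,L/K) = \e^{a_\chi + b_\chi s}\prod_{\rho}\Bigl(1-\frac{s}{\rho}\Bigr)\e^{s/\rho},
\]
the product being over nontrivial zeros. Taking the logarithmic derivative of~\eqref{defLambda} then equates $\Lambda'/\Lambda(s,\chi,L/K)$ to a sum of four pieces: (i) the pole contribution $E_0(\chi)\bigl(\tfrac{1}{s}+\tfrac{1}{s-1}\bigr)$, (ii) the constant $\tfrac12\log\mathfrak f(L/K,\chi)$ together with the logarithms of $\pi$ and $2\pi$ coming from $\Gamma_{\mathbb R},\Gamma_{\mathbb C}$, (iii) the archimedean $\Gamma'/\Gamma$ terms with the appropriate multiplicities $n_+(\mathfrak p)$, $n_-(\mathfrak p)$ and $\chi(1)r_{2,K}$, and (iv) the finite sum $L'/L(s,\chi,L/K) = -\sum_{\mathfrak p}\sum_{m\ge 1}\chi(\sigma_{\mathfrak p}^m)\log N_K\mathfrak p\cdot (N_K\mathfrak p)^{-ms}$ obtained by logarithmically differentiating the Euler product~\eqref{eq:Galois-L-function-definition}.

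Next, I multiply each side by $\tfrac{1}{2\pi i}\Phi_F(s)$ and integrate along the line $\Re e(s) = 1+\delta_0$, then shift the contour to $\Re e(s)=-\delta_0$. The growth condition \eqref{defPhiF} on $F$ ensures $\Phi_F$ is holomorphic and decays sufficiently in the critical strip, while the functional equation $\Lambda(s,\chi,L/K)=W(\chi)\Lambda(1-s,\bar\chi,L/K)$ and the estimate $\Phi_F(s)+\Phi_F(1-s)=O(1)$ on vertical lines justify the shift. The residues picked up at the nontrivial zeros $\rho$ produce $\sum_\rho \Phi_F(\rho)$ (the left-hand side), and the residues at $s=0,1$ (present only when $\chi$ is trivial) produce the $E_0(\chi)(\Phi_F(0)+\Phi_F(1))$ term.

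The remaining identification is a term-by-term match. The contribution from the prime sum in piece~(iv), after integrating against $\Phi_F$, unfolds to
\[
-\frac{1}{2\pi}\sum_{\mathfrak p}\sum_{m\ge 1}\frac{\log N_K\mathfrak p}{(N_K\mathfrak p)^{m/2}}\Bigl(\chi(\sigma_{\mathfrak p}^m)F\bigl(\tfrac{-m\log N_K\mathfrak p}{2\pi}\bigr) + \bar\chi(\sigma_{\mathfrak p}^m)F\bigl(\tfrac{m\log N_K\mathfrak p}{2\pi}\bigr)\Bigr),
\]
using the Mellin inversion $\tfrac{1}{2\pi i}\int \Phi_F(s)a^{-s}\,\mathrm ds = F(\tfrac{\log a}{2\pi})a^{-1/2}$ and the symmetric pair of residue shifts on both sides of the critical line. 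The constants in piece~(ii) integrate to $\tfrac{F(0)}{\pi}$ times the stated logarithm, evaluating $\Phi_F$ along the critical line and using $\tfrac{1}{2\pi i}\int \Phi_F(s)\,\mathrm ds = F(0)$. Finally, the digamma factors in piece~(iii) are handled by the classical Hurwitz/Kronecker identity
\[
\frac{\Gamma'}{\Gamma}(s) = -\gamma + \int_0^\infty \frac{\e^{-x}-\e^{-sx}}{1-\e^{-x}}\mathrm dx,
\]
together with the Legendre duplication formula to rewrite $\Gamma'/\Gamma$ evaluated at $\tfrac14,\tfrac34,\tfrac12$; after pairing with $\Phi_F$ and symmetrizing, the contour integrals reassemble exactly as the stated integral against $(2F(0)-F(x)-F(-x))$ with the correct archimedean weights $n_\pm(\mathfrak p)$ and $\chi(1)r_{2,K}$.

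The main obstacle is bookkeeping rather than conceptual: ensuring that the archimedean gamma factors in~\eqref{defLambda} are attributed correctly to real places (with multiplicities $n_+,n_-$) versus complex places (with the uniform factor $\Gamma_{\mathbb C}^{\chi(1)r_{2,K}}$) and verifying that the symmetrization $F\mapsto 2F(0)-F(x)-F(-x)$ absorbs the divergent constant $-\gamma$ arising from each digamma integral. The cancellation of these constants against those coming from the Hadamard product (that is, the unevaluated $b_\chi$) is what produces the clean integral formula. Once this matching is done carefully, the formula follows. I expect no new difficulties beyond those already handled in the classical single-character case; Artin holomorphy is used only to give the Hadamard product and the absence of poles off $s=0,1$.
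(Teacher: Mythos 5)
Your proposal is correct and follows essentially the same route as the paper's proof: decompose $\Lambda'/\Lambda(s,\chi,L/K)$ into pole, conductor, archimedean, and $L'/L$ pieces, integrate against $\Phi_F$ along a vertical line, shift the contour using the functional equation to pick up the zero sum and the $s=0,1$ residues, and then identify each term by Fourier/Mellin inversion. The only cosmetic difference is that the paper outsources the prime-sum unfolding and the digamma-integral manipulation to Montgomery--Vaughan's Theorem 12.13 and Lemma 12.14, while you carry these out directly via Gauss's integral for $\Gamma'/\Gamma$; the underlying computations coincide.
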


Lemma \ref{lem:weil-explicit-Galois-L-functions} is more general than the same result for Dedekind zeta functions, stated in the following corollary.

\begin{corollary}[Weil's explicit formula for Dedekind zeta functions]\label{cor:weil-explicit-dedekind}
 Let $K/\Q$ be a number field of degree~$n_K$, norm $N_K$, and discriminant $\Delta_K$, and assume that $K$ has $r_{1,K}$ real embeddings and $r_{2,K}$ conjugate pairs of complex embeddings. Then

\begin{align*}
\sum_{\gamma}\Phi_F(\rho) = &\Phi_F(0) + \Phi_F(1) +  \frac {F(0)}{\pi}\Big( \log\big(\Delta_K^{1/2}\pi^{-r_{1,K}/2}(2\pi)^{-r_{2,K}}\big) + \frac{r_{1,K}}{2 } \frac{\Gamma'}{\Gamma}\Big(\frac14\Big) +  {r_{2,K}} \frac{\Gamma'}{\Gamma}\Big(\frac12\Big)\Big) \\
&-\frac 1{2\pi} \sum_{\mathfrak n \subset \mathcal O_K} \frac{\Lambda_K(\mathfrak n)}{(N_K\mathfrak n)^{1/2}} \Big(F\Big(\frac{-\log N_K\mathfrak n}{2\pi}\Big) + F\Big(\frac{\log N_K\mathfrak n}{2\pi}\Big)\Big) \\
&+ \int_0^\infty \big(2F(0)-F( x)-F(-x)\big)\Big( \frac{r_{1,K}\e^{-\pi x}}{1-\e^{-4\pi x}} 
+  \frac{ r_{2,K}\e^{- \pi x }}{1-\e^{-{2\pi x}}}\Big)\mathrm{d}x,
\end{align*}  
where the sum on the left-hand side is taken over all nontrivial zeroes of $\zeta_K$. 
\end{corollary}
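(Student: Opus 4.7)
The strategy is to derive the corollary by applying Lemma \ref{lem:weil-explicit-Galois-L-functions} to the extension $K/\Q$ for each irreducible character $\chi$ of $G=\Gal(K/\Q)$, multiplying by $\chi(1)$, and summing. Under Artin's holomorphy conjecture, the decomposition \eqref{eq:Dedekind-zeta-decomposition} written in the form $\zeta_K(s)=\prod_{\chi\in\Irr(G)}L(s,\chi,K/\Q)^{\chi(1)}$ guarantees that the $\chi(1)$-weighted aggregate of the zero sums from the left-hand side of Lemma \ref{lem:weil-explicit-Galois-L-functions}, taken over zeroes of each Artin $L$-function, collapses to a single sum over the nontrivial zeroes of $\zeta_K(s)$. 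Among all $\chi$, only the trivial character contributes to $E_0(\chi)$, producing the pole term $\Phi_F(0)+\Phi_F(1)$.

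For the finite and conductor pieces, I specialize the base field of Lemma \ref{lem:weil-explicit-Galois-L-functions} to $\Q$, so that the $r_{1,K},r_{2,K}$ appearing there become $r_{1,\Q}=1$ and $r_{2,\Q}=0$, and the sum over real places of the base reduces to a single term. The conductor contribution $\tfrac12\chi(1)\log \mathfrak{f}(K/\Q,\chi)$ summed over $\chi$ gives $\tfrac12\log\Delta_K$ by the conductor--discriminant formula \eqref{conductor discriminant}. The $-\tfrac{\chi(1)}{2}\log\pi$ contribution, weighted by $\chi(1)$ and summed via $\sum_\chi\chi(1)^2 = n_K = r_{1,K}+2r_{2,K}$, yields $-(\tfrac{r_{1,K}}{2}+r_{2,K})\log\pi$.

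The key observation for the archimedean digamma terms is that using $\Gamma_{\C}(s)=\Gamma_{\R}(s)\Gamma_{\R}(s+1)$ and comparing archimedean Euler factors in the decomposition of $\zeta_K$ yields the character-theoretic identities $\sum_\chi\chi(1)n_+(\mathfrak p_\infty)=r_{1,K}+r_{2,K}$ and $\sum_\chi\chi(1)n_-(\mathfrak p_\infty)=r_{2,K}$. Applying the Legendre duplication identity $\frac{\Gamma'}{\Gamma}(\tfrac14)+\frac{\Gamma'}{\Gamma}(\tfrac34)=2\frac{\Gamma'}{\Gamma}(\tfrac12)-2\log 2$, the weighted $\Gamma'/\Gamma$-terms from Lemma \ref{lem:weil-explicit-Galois-L-functions} collapse to $\tfrac{r_{1,K}}{2}\frac{\Gamma'}{\Gamma}(\tfrac14)+r_{2,K}\frac{\Gamma'}{\Gamma}(\tfrac12)-r_{2,K}\log 2$, and the stray $-r_{2,K}\log 2$ combines with $-r_{2,K}\log\pi$ to form the expected $\log(2\pi)^{-r_{2,K}}$. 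By the same identities, the integrand of the archimedean integral becomes $r_{1,K}\e^{-\pi x}/(1-\e^{-4\pi x})+r_{2,K}(\e^{-\pi x}+\e^{-3\pi x})/(1-\e^{-4\pi x})$, and the second fraction simplifies to $r_{2,K}\e^{-\pi x}/(1-\e^{-2\pi x})$ via the factorization $1-\e^{-4\pi x}=(1-\e^{-2\pi x})(1+\e^{-2\pi x})$.

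Finally, for the prime sum, taking the logarithmic derivative of the decomposition gives
$$\sum_{\mathfrak{n}\subset\mathcal{O}_K}\frac{\Lambda_K(\mathfrak n)}{(N_K\mathfrak n)^s}=\sum_{\chi\in\Irr(G)}\chi(1)\sum_p\sum_{m\geq 1}\chi(\sigma_p^m)\frac{\log p}{p^{ms}},$$
so the repeated prime sums over rational primes $p$, weighted by $\chi(1)$, aggregate into a single sum over ideals of $\mathcal{O}_K$ with the correct $\Lambda_K$ weight; the $F$-factors $F(\pm m\log p/(2\pi))$ repackage as $F(\pm\log N_K\mathfrak n/(2\pi))$. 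The main conceptual step is the archimedean manipulation via Legendre duplication combined with the identities $\sum_\chi\chi(1)n_\pm$; the remainder is bookkeeping.
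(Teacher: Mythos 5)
Your proposal is correct and coincides with the second of the two proofs sketched in the paper's remark following the Corollary: sum Lemma~\ref{lem:weil-explicit-Galois-L-functions} over all $\chi\in\Irr(\Gal(K/\Q))$ weighted by $\chi(1)$, then simplify via the Conductor--discriminant formula and the identities $\sum_\chi\chi(1)n_+ = r_{1,K}+r_{2,K}$, $\sum_\chi\chi(1)n_- = r_{2,K}$. You fill in the archimedean bookkeeping that the paper leaves implicit (the Legendre duplication identity $\tfrac{\Gamma'}{\Gamma}(\tfrac14)+\tfrac{\Gamma'}{\Gamma}(\tfrac34)=2\tfrac{\Gamma'}{\Gamma}(\tfrac12)-2\log 2$ and the factorization $1-\e^{-4\pi x}=(1-\e^{-2\pi x})(1+\e^{-2\pi x})$), and these computations check out. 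The paper also notes a shorter route you did not mention: apply Lemma~\ref{lem:weil-explicit-Galois-L-functions} directly with $\chi$ the trivial character (so the $L$-function is $\zeta_K$ itself), which avoids the character-sum aggregation entirely and gives the corollary in one step.
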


\begin{remark}
Corollary \ref{cor:weil-explicit-dedekind} follows from Lemma \ref{lem:weil-explicit-Galois-L-functions} by taking $\chi$ to be the trivial character. However, the statement of Corollary \ref{cor:weil-explicit-dedekind} can also be obtained by noting that
\[\sum_{\substack{\gamma \\ \zeta_K(\tfrac 12 + i\gamma) = 0}} \Phi_F(\tfrac 12 + i\gamma) = \sum_{\chi} \chi(1) \sum_{\substack{\gamma \\ L(\tfrac 12 + i\gamma, \chi,K/\Q) = 0}} \Phi_F(\tfrac 12 + i\gamma), \]
where the outside sum on the right is taken over all characters $\chi$ of $\mathrm{Gal}(K/\Q)$, and then applying Lemma 
\ref{lem:weil-explicit-Galois-L-functions} to every term on the right. This latter formula simplifies to the statement of Corollary \ref{cor:weil-explicit-dedekind} by applying the Conductor-discriminant formula \eqref{conductor discriminant} and the fact that for a Galois extension $K/\Q$ (which must be either totally real or totally complex),
\[\sum_{\chi} \chi(1) n_+ = r_{1,K} + r_{2,K}, \qquad \sum_{\chi} \chi(1) n_- = r_{2,K}.\]
\end{remark}

\begin{proof}[Proof of Lemma \ref{lem:weil-explicit-Galois-L-functions}]
We begin with the functional equation for Artin $L$-functions, which can be found in \cite[Chapter VII]{MR1697859-neukirch}.  
Artin's holomorphy conjecture states that $\Lambda(s,\chi,L/K)$, defined in \eqref{defLambda}, is an entire function for all $\chi$. By \cite[Theorem 12.6]{MR1697859-neukirch}, 
\begin{equation*}
    \Lambda(s,\chi,L/K) = W(\chi)\Lambda(1-s,\bar{\chi},L/K),
\end{equation*}
where $W(\chi) \in \mathbb C$ is a constant with absolute value $1$. Then
\begin{equation*}
    \sum_{\gamma} \Phi_F(\rho) = \frac 1{2\pi i} \int_{(1+\delta_1)} \Big(\Phi_F(s) \frac{\Lambda'}{\Lambda}(s,\chi,L/K) + \Phi_F(1-s)\frac{\Lambda'}{\Lambda}(s,\bar{\chi},L/K) \Big)\mathrm{d}s,
\end{equation*}
since when moving the line of integration of $\Phi_F(s) \frac{\Lambda'}{\Lambda}(s,\chi,L/K)$ from $1 + \delta_1$ to $-\delta_1$, one picks up contributions from the zeroes (and can then obtain the integral on the right hand side by applying the functional equation for $\frac{\Lambda'}{\Lambda}(s,\chi,L/K)$).

The logarithmic derivative of $\Lambda(s,\chi,L/K)$ is given by
\begin{align*}
\frac{\Lambda'}{\Lambda}(s,\chi,L/K) = &E_0(\chi)\left(\frac 1s + \frac 1{s-1}\right) + \frac 12 \log \mathfrak f(L/K,\chi) - \frac{n_{K/\mathbb Q}\chi(1)}{2}\log \pi 
        -\chi(1)r_{2,K}\log 2\\
    &+ \frac 12 \sum_{\mathfrak p|\infty \text{ real}} \left(n_+(\mathfrak p) \frac{\Gamma'}{\Gamma}\left(\frac s2\right) + n_-(\mathfrak p) \frac{\Gamma'}{\Gamma}\left(\frac{s+1}{2}\right)\right) 
    + r_{2,K} \chi(1)\frac{\Gamma'}{\Gamma}(s) + \frac{L'}{L}(s,\chi,L/K).
\end{align*}
As shown in \cite[Proposition 2.3.1]{nathan-ng-thesis}, the logarithmic derivative $\frac{L'}{L}(s,\chi,L/K)$ is given by
\begin{equation*}
    \frac{L'}{L}(s,\chi,L/K) = -\sum_{\mathfrak p \in \mathcal O_K} \sum_{m=1}^\infty \frac{\chi(\sigma_{\mathfrak p}^m)\log N_K\mathfrak p}{(N_K\mathfrak p)^{ms}},
\end{equation*}
where $\sigma_{\mathfrak p}$ is the conjugacy class of the Frobenius elements. For $\mathfrak p$ unramified, $\chi(\sigma_{\mathfrak p}^m)$ is well-defined, and if $\mathfrak p$ is ramified, let $\mathfrak q|\mathfrak p$ and define $\chi(\sigma_{\mathfrak p}^m) = \frac 1{|I_{\mathfrak q}|}\sum_{\tau \in I_{\mathfrak q}} \chi(\sigma_{\mathfrak q}^m \tau).$

As in the proof of Theorem 12.13 in \cite{MR2378655-montgomery-vaughan},
\begin{align*}
  &\frac 1{2\pi i}\int_{(1+\delta_1)}\Big(\Phi_F(s) +\Phi_F(1-s)\Big)\frac{L'}{L}(s,\chi,L/K) \mathrm{d}s \\
   &= -\frac 1{2\pi} \sum_{\mathfrak p \in \mathcal O_K} \sum_{m=1}^\infty \frac{\log N_K\mathfrak p}{(N_K\mathfrak p)^{m/2}}\left(\chi(\sigma_\mathfrak p^m)F\left(\frac{-m\log N_K\mathfrak p}{2\pi}\right) + \bar{\chi}(\sigma_{\mathfrak p}^m)F\left(\frac{m\log N_K\mathfrak p}{2\pi}\right)\right).
\end{align*}

The remaining contribution to the integral can be computed via the residue theorem and applying Lemma~12.14 from~\cite{MR2378655-montgomery-vaughan}.
\end{proof}

\begin{lemma}\label{lemma explicit formula}
Assume Artin's holomorphy conjecture and GRH. Let $0<\kappa<\tfrac 12$ and $\eta \in \mathcal E_K$. For $t \ge 0$ and $0 < \delta < \kappa$,
\begin{equation*}
    \psi_{\eta,K}(\e^t,\delta)-\e^{\tfrac t2}\delta\mathcal L_{\eta}(\tfrac \delta{2})  +\delta\widehat \eta(0) {\rm ord}_{s=\frac 12}\zeta_K(s )= -\delta \sum_{\rho} \e^{(\rho-1/2)t}\widehat{\eta}\Big(\frac{\delta}{2\pi}\frac{\rho-1/2}{i}\Big) + O_{\kappa,\eta}\big(n_K E_{\kappa,\eta}(t,\delta)\big),
\end{equation*}
where $\rho\neq \tfrac12$ runs over the nontrivial zeroes of $\zeta_K(s)$, and
\begin{equation*}
    E_{\kappa,\eta}(t,\delta) := \begin{cases} \delta \e^{-  t/2} + \log(\delta^{-1} + 2)\e^{- {\kappa t}/{\delta}} &\text{ if } t \ge 1, \\ 
    \tfrac{\delta}{t} + \log(\delta^{-1} + 2)\e^{- {\kappa t}/{\delta}} &\text{ if }\delta \le t \le 1, \\
    \log(\delta^{-1} + 2) &\text{ if }0 \le t \le \delta.\end{cases}
\end{equation*}
\end{lemma}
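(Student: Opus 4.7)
The plan is to apply Weil's explicit formula (Corollary \ref{cor:weil-explicit-dedekind}) with the specific test function
\[
F(x) := -2\pi\,\eta\bigl(\delta^{-1}(2\pi x + t)\bigr).
\]
The decay estimates $\eta(u),\eta'(u)\ll e^{-\kappa|u|}$ from $\eta\in\mathcal E_\kappa$, together with the hypothesis $\delta<\kappa$, ensure $F\in\mathcal F(\delta_0)$ for some $\delta_0>0$. After the substitution $y=\delta^{-1}(2\pi x + t)$ in the definition of $\Phi_F(s)$, one obtains
\[
\Phi_F(s) = -\delta\,e^{(s-1/2)t}\,\widehat\eta\!\left(\frac{\delta}{2\pi}\cdot\frac{s-1/2}{i}\right),
\]
which is precisely the summand appearing in the zero sum on the right-hand side of the lemma. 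Evaluating at the two trivial poles of $\zeta_K$ gives $\Phi_F(1)=-\delta\,e^{t/2}\mathcal L_\eta(\delta/2)$ and $\Phi_F(0)=-\delta\,e^{-t/2}\mathcal L_\eta(\delta/2)$, both via evenness of $\eta$ and \eqref{eq:def-of-L-eta}; and $\Phi_F(1/2)=-\delta\,\widehat\eta(0)$, which, after isolating $\rho=1/2$ from the zero sum, accounts for the central-zero correction $\delta\,\widehat\eta(0)\,\mathrm{ord}_{s=1/2}\zeta_K(s)$ in the statement.

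Setting $u=\log N_K\mathfrak n/(2\pi)$ and using that $\eta$ is even, the prime sum in Weil's formula becomes
\[
-\frac{1}{2\pi}\sum_{\mathfrak n}\frac{\Lambda_K(\mathfrak n)}{N_K\mathfrak n^{1/2}}\bigl(F(u)+F(-u)\bigr) = \psi_{\eta,K}(e^t,\delta) + \psi_{\eta,K}(e^{-t},\delta).
\]
Rearranging Corollary \ref{cor:weil-explicit-dedekind} to solve for $\psi_{\eta,K}(e^t,\delta)$ then produces, besides the desired zero sum and main term, four error contributions: (i) $\Phi_F(0)\ll\delta\,e^{-t/2}$, which fits into the $\delta e^{-t/2}$ piece of $E_{\kappa,\eta}(t,\delta)$; (ii) the reflected prime sum $\psi_{\eta,K}(e^{-t},\delta)$, bounded by $n_K\log(\delta^{-1}+2)\,e^{-\kappa t/\delta}$ via the decay of $\eta$ and the crude estimate $\sum_{\mathfrak n}\Lambda_K(\mathfrak n)\,N_K\mathfrak n^{-\sigma}\ll n_K$ valid for $\sigma\geq 3/2$; (iii) the discriminant and Gamma contribution $\pi^{-1}F(0)A_K = -\eta(t/\delta)\log\Delta_K + O(n_K|\eta(t/\delta)|)$; and (iv) the Archimedean integral $I$ from the last line of the corollary. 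Contribution (iii) is $O(n_K\log(\mathrm{rd}_K)\,|\eta(t/\delta)|)$, and in the regime $\log\mathrm{rd}_K\ll\log(\delta^{-1}+2)$ in which the lemma will be applied (via the hypothesis $\delta\ll\mathrm{rd}_K^{-c_\eta}$ of the main theorem) it is absorbed into the stated error.

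The main technical work is bounding (iv). I will split $[0,\infty)$ into three ranges. On $[0,\delta/(2\pi)]$ a second-order Taylor expansion of $\eta$ around $t/\delta$ yields the cancellation $|2F(0)-F(x)-F(-x)|\ll\delta^{-2}\|\eta''\|_\infty\, x^2$, which combined with the $1/x$ divergence of the Archimedean kernel gives an $O(n_K)$ contribution. On $[\delta/(2\pi),1]$ the difference is $O(1)$ and the $\log$ factor of $E_{\kappa,\eta}(t,\delta)$ emerges from the $\int\mathrm{d}x/x$. When $\delta\leq t\leq 1$ additional care is needed near the resonant window $x\sim t/(2\pi)$ of width $O(\delta)$, where $|F(x)|=O(1)$ but the kernel is of size $n_K/t$; this window is the source of the $\delta/t$ term of $E_{\kappa,\eta}(t,\delta)$. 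Finally, on $[1,\infty)$ the rapid decay of $F$ at rate $e^{-\kappa t/\delta}$ combined with the $e^{-\pi x}$ decay of the kernel contributes an error absorbed into the tail piece of $E_{\kappa,\eta}(t,\delta)$. Handling the resonant window cleanly is the main obstacle; the rest is elementary but requires careful piecewise bookkeeping matching the three cases in the definition of $E_{\kappa,\eta}(t,\delta)$.
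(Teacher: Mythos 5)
Your proposal follows the paper's approach: both apply Corollary \ref{cor:weil-explicit-dedekind} with the test function $F(u)=\eta(\delta^{-1}(t+2\pi u))$ (the paper's normalization; your extra factor of $-2\pi$ is harmless and merely saves you from multiplying through by $-2\pi$ at the end), then read off the zero sum, the pole contributions $\Phi_F(0),\Phi_F(1)$, the prime sum $\psi_{\eta,K}(\e^t,\delta)+\psi_{\eta,K}(\e^{-t},\delta)$, the central-zero term, and the discriminant/Archimedean contributions. The paper itself disposes of the last step in a single sentence, saying the Archimedean integral is handled as in the integer case and is bounded by $n_K$ times its integer analogue; you work this out in more detail, which is useful.

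Two points to be careful about. First, on $[0,\delta/(2\pi)]$ you invoke a \emph{second}-order Taylor expansion with $\|\eta''\|_\infty$, but $\mathcal E_\kappa$ only guarantees $\eta$ once differentiable with $\eta'(t)\ll\e^{-\kappa|t|}$. A first-order estimate is what you have and what you need: $|2F(0)-F(x)-F(-x)|\ll\delta^{-1}|x|\sup|\eta'|$, which paired with the $O(n_K/x)$ kernel still gives an $O(n_K)$ contribution over that range. Second, your contribution (iii) is genuinely the right thing to worry about: the term $\eta(t/\delta)\log\Delta_K=n_K\eta(t/\delta)\log(\mathrm{rd}_K)$ is not $\ll_{\kappa,\eta}n_K E_{\kappa,\eta}(t,\delta)$ unless $\log(\mathrm{rd}_K)\ll\log(\delta^{-1}+2)$ (at $t=0$ one compares $n_K\eta(0)\log\mathrm{rd}_K$ with $n_K\log(\delta^{-1}+2)$), so the bound as written requires $\delta\ll\mathrm{rd}_K^{-c}$, which is indeed the regime in which the lemma is applied in Theorem \ref{thm:main-theorem}. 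You are right to flag this and to say that absorption only happens in that regime; the paper leaves it implicit. The remaining hand-waving around the resonant window (the $\delta/t$ term for $\delta\le t\le 1$ and the $\delta\e^{-t/2}$ term for $t\ge 1$, coming from the window $|2\pi x-t|\ll\delta$ where $|F(-x)|\asymp 1$ against the kernel of size $n_K\e^{-\pi x}/x$) is accurate in spirit and no vaguer than the paper's one-line appeal to the integer case.
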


\begin{proof} We follow the proof of Lemma 2.1 of \cite{MR4322621-conjecture-Montgomery-Soundararajan-integers}. 
Apply Corollary \ref{cor:weil-explicit-dedekind} with $F(u) := \eta\Big(\tfrac{t+2\pi u}{\delta}\Big)$, so that $\widehat{F}(\xi) = \e^{i\xi t}\frac{\delta}{2\pi} \widehat{\eta}\Big(\frac{\delta\xi}{2\pi}\Big).$ Then
\begin{equation*}
    \begin{split}
\psi_{\eta,K}(\e^t,\delta) &- \e^{t/2}\delta \mathcal L_{\eta}\Big(\frac{\delta}{2}\Big) = -\delta \sum_{\rho} \e^{(\rho -1/2)t} \widehat{\eta}\Big(\frac{\delta}{2\pi}\frac{\rho-1/2}{i}\Big) \\
&+ \e^{-t/2}\delta \int_{-\infty}^{\infty} \e^{\delta x/2}\eta(x)\dx - \sum_{\substack{\mathfrak n \subset \mathcal O_K \\ \mathfrak n \text{ ideal}}} \frac{\Lambda(\mathfrak n)}{N_K\mathfrak n^{1/2}} \eta\Big(\frac{t+\log N_K\mathfrak n}{\delta}\Big) \\
&+\eta\Big(\frac t{\delta}\Big)\Big(r_{1,K} \frac{\Gamma'}{\Gamma}\Big(\frac14\Big) + 2r_{2,K} \frac{\Gamma'}{\Gamma}\Big(\frac12\Big) + \log\big(|\Delta_K|\pi^{-r_{1,K}}(2\pi)^{-2r_{2,K}}\big)\Big) \\
&+ \int_0^\infty \Big(2\eta\Big(\frac t{\delta}\Big) - \eta\Big(\frac{t+x}{\delta}\Big) -\eta\Big(\frac{t-x}{\delta}\Big)\Big)\Big( \frac{r_{1,K}\e^{-x/2}}{1-\e^{-2x}} +  \frac{r_{2,K}\e^{-x/2}}{1-\e^{-x}} \Big) \dx.
\end{split}
\end{equation*} 
Performing the same careful analysis of the final integral as in the integer case, and noting that the final integral is bounded by the $n_K$th multiple of its integer analog, gives the result.
\end{proof}

 For $\chi\in\Irr({\Gal}( L/K  ))$,  $h \in \mathcal E_\kappa$, and $\delta > 0$, 
define
\begin{equation*}
b (\chi;h,\delta):=\sum_{\rho} 
 h\Big(\frac{\delta}{2\pi}\frac{\rho-\tfrac 12}{i}\Big) 
\end{equation*}
 where (assuming GRH) the sum on the left-hand side is taken over all nontrivial zeroes of $L(s,\chi,L/K)$ and 
\begin{equation}
   \label{def:b0hdelta}b_0 (\chi;h,\delta):=\sum_{\rho\neq 1/2} 
 h\Big(\frac{\delta}{2\pi}\frac{\rho-\tfrac 12}{i}\Big). 
\end{equation}
 
In Lemma \ref{lemma3.4}, we apply Lemma \ref{lem:weil-explicit-Galois-L-functions} in order to estimate $b(\chi;h,\delta).$ 

\begin{lemma}\label{lemma3.4} Let $L/K$ be a Galois extension of number fields, let $\chi$ be an irreducible character of the Galois group $\Gal(L/K)$. Assume the Artin holomorphy conjecture and GRH.
Let $0<\kappa<\tfrac 12$ and let $h:\R\to\R$ be a measurable function such that for all $\xi \in \R$, $0 \le h(\xi) \ll (|\xi|+1)^{-2}(\log(|\xi|+2))^{-2-\kappa}$, and for all $t \in \R$, $\widehat h(t), \widehat h'(t)\ll \e^{-\kappa|t|}$. Fix an Artin $L$-function $L(s,\chi,L/K)$ for the extension $L/K$.
For $0<\delta < 2\kappa$, 
\begin{align*}
\delta b (\chi;h,\delta)  = &
n_{K}\chi(1)\widehat{h}(0) \log(\delta^{-1})+n_{K}\chi(1)\int_{\mathbb R}h(\xi)\log |\xi|{\rm d}\xi +\widehat{h}(0)  \log\big( \mathfrak f(L/K,\chi)2^{-2r_{2,K}\chi(1)}\big)\\
&+O_h\big(n_{K}\chi(1)\delta\big).
\end{align*}
\end{lemma}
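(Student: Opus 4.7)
The plan is to apply Weil's explicit formula in the form of Lemma~\ref{lem:weil-explicit-Galois-L-functions} to the Artin $L$-function $L(s,\chi,L/K)$, with a test function tailored so that the spectral side becomes exactly $b(\chi;h,\delta)$. Since $\Phi_F(\tfrac 12 + it) = \widehat F(t)$ in the paper's Fourier convention, we want $\widehat F(\gamma) = h(\tfrac{\delta\gamma}{2\pi})$; Fourier inversion then forces the choice $F(x) := \tfrac{2\pi}{\delta}\,\widehat h(-\tfrac{2\pi x}{\delta})$. The exponential decay hypotheses $\widehat h(t), \widehat h'(t) \ll e^{-\kappa|t|}$, combined with $\delta < 2\kappa$, ensure $F \in \mathcal F(\delta_0)$ for some small $\delta_0 > 0$.

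Substituting this $F$ into Lemma~\ref{lem:weil-explicit-Galois-L-functions} and multiplying through by $\delta$, I would use $\delta F(0) = 2\pi\widehat h(0)$ to rewrite each ``$F(0)$ times constant'' contribution on the right. The prime sum is exponentially small in $\delta^{-1}$: using $|\chi(\sigma_\mathfrak p^m)| \le \chi(1)$ together with $\widehat h(t) \ll e^{-\kappa|t|}$, it is bounded by
\[\tfrac{\chi(1)}{\delta}\sum_{\mathfrak p}\sum_{m\ge 1}\tfrac{\log N_K\mathfrak p}{(N_K\mathfrak p)^{m(1/2+\kappa/\delta)}} \ll n_K\chi(1)\,2^{-\kappa/\delta},\]
easily absorbed in the target error $O_h(n_K\chi(1)\delta)$ after multiplication by $\delta$. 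The pole contribution $E_0(\chi)(\Phi_F(0)+\Phi_F(1))$ is $O(\widehat h(0))$, likewise negligible.

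The crux is the archimedean integral. After the change of variable $u = 2\pi x/\delta$, it becomes $\tfrac{2}{\delta}\int_0^\infty (2\widehat h(0) - \widehat h(u) - \widehat h(-u))\,A(\delta u)\,du$, where
\[A(v) = \sum_{\mathfrak p|\infty \text{ real}}\frac{n_+(\mathfrak p)e^{-v/2} + n_-(\mathfrak p)e^{-3v/2}}{1 - e^{-2v}} + \frac{r_{2,K}\chi(1)e^{-v/2}}{1 - e^{-v}}.\]
Using $n_+(\mathfrak p) + n_-(\mathfrak p) = \chi(1)$ and $r_{1,K} + 2r_{2,K} = n_K$, one computes $A(v) = \tfrac{n_K\chi(1)}{2v} + O(1)$ as $v \to 0^+$, with exponential decay at infinity. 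Decomposing $A(v) = \tfrac{n_K\chi(1)}{2v} + R(v)$ with $R$ bounded near the origin, the singular piece yields, via Fourier inversion paired with the Frullani-type identity $\int_0^\infty (e^{-u} - \cos(2\pi\xi u))\,\tfrac{du}{u} = \gamma + \log(2\pi|\xi|)$, the two leading terms $n_K\chi(1)\widehat h(0)\log(\delta^{-1}) + n_K\chi(1)\int_{\mathbb R} h(\xi)\log|\xi|\,d\xi$ with $O(\delta)$ error.

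The main obstacle is the final bookkeeping of bounded constants: the integral of the remainder $R(v)$ against $(2\widehat h(0) - \widehat h(u) - \widehat h(-u))$, together with the values $\Gamma'/\Gamma(1/2)$, $\Gamma'/\Gamma(1/4)$, $\Gamma'/\Gamma(3/4)$ appearing in Lemma~\ref{lem:weil-explicit-Galois-L-functions} and the $\log \pi$, $\log(2\pi)$ terms in the conductor portion, must assemble into exactly $\widehat h(0)\log\bigl(\mathfrak f(L/K,\chi)\,2^{-2r_{2,K}\chi(1)}\bigr)$, with all $\log \pi$ and Euler-$\gamma$ pieces cancelling. The identities $\Gamma'/\Gamma(1/2) = -\gamma - 2\log 2$ and $\Gamma'/\Gamma(1/4) + \Gamma'/\Gamma(3/4) = -2\gamma - 6\log 2$ drive this cancellation. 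The calculation otherwise mirrors the integer case of \cite{MR4322621-conjecture-Montgomery-Soundararajan-integers}, with each numerical constant scaled by the degree $n_K\chi(1)$ of the Artin $L$-function.
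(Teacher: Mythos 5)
Your choice of test function $F(x)=\tfrac{2\pi}{\delta}\widehat h(-\tfrac{2\pi x}{\delta})$, the way you feed it into Lemma~\ref{lem:weil-explicit-Galois-L-functions}, the estimate of the prime sum via $|\chi(\sigma_{\mathfrak p}^m)|\le\chi(1)$ and the exponential decay of $\widehat h$, and the treatment of the pole contribution all match the paper exactly. The high-level plan is the right one and the digamma identities you cite are indeed what drive the final cancellation of $\log\pi$ and Euler--$\gamma$ constants.

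The problem is the treatment of the archimedean integral. After writing it as $\delta\int_0^\infty\bigl(2\widehat h(0)-\widehat h(u)-\widehat h(-u)\bigr)A(\delta u)\,du$, you decompose $A(v)=\tfrac{n_K\chi(1)}{2v}+R(v)$ and claim the singular piece produces the two leading terms. But the singular piece is
\[
\delta\int_0^\infty\bigl(2\widehat h(0)-\widehat h(u)-\widehat h(-u)\bigr)\frac{n_K\chi(1)}{2\delta u}\,du
=\frac{n_K\chi(1)}{2}\int_0^\infty\frac{2\widehat h(0)-\widehat h(u)-\widehat h(-u)}{u}\,du ,
\]
which is $\delta$-independent, so it cannot yield the $n_K\chi(1)\widehat h(0)\log(\delta^{-1})$ term. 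Worse, it diverges: $2\widehat h(0)-\widehat h(u)-\widehat h(-u)\to 2\widehat h(0)$ as $u\to\infty$, so the integrand is $\sim 2\widehat h(0)/u$. Simultaneously, $R(v)=A(v)-\tfrac{n_K\chi(1)}{2v}\sim -\tfrac{n_K\chi(1)}{2v}$ as $v\to\infty$ (since $A$ decays exponentially), so the remainder piece $\delta\int_0^\infty(\cdots)R(\delta u)\,du$ also diverges logarithmically. You have split a convergent integral into two divergent ones. To repair this you would need a $\delta$-dependent cutoff in the singular part, e.g.\ $A(v)=\tfrac{n_K\chi(1)}{2v}\e^{-2v}+R(v)$, so that the singular piece becomes $\tfrac{n_K\chi(1)}{2}\int_0^\infty\tfrac{(2\widehat h(0)-\widehat h(u)-\widehat h(-u))\e^{-2\delta u}}{u}\,du$, which \emph{does} carry the $\log(\delta^{-1})$ and is accessible to the Frullani identity $\int_0^\infty\tfrac{(1-\cos bu)\e^{-au}}{u}du=\tfrac12\log(1+b^2/a^2)$, while $R$ is then bounded at $0$ and exponentially small at $\infty$.

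The paper avoids this pitfall by not decomposing the kernel at all; instead it splits the integral over $x$ into the ranges $[0,\delta]$, $[\delta,1]$, $[1,\infty)$ and extracts $\log(\delta^{-1})$ from $\int_\delta^1 dx/x$ in the middle range, pairing the tails via the identity $\int_0^1(1-\cos(2\pi x\xi))\tfrac{dx}{x}-\int_1^\infty\cos(2\pi x\xi)\tfrac{dx}{x}=\log(\pi|\xi|)+\mathrm{const}$. Once you insert an exponential regulator your route reaches the same place, but as written the decomposition does not close.
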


\begin{proof}[Proof of Lemma \ref{lemma3.4}]
Applying Lemma \ref{lem:weil-explicit-Galois-L-functions} with $F(x) = 2\pi \delta^{-1}\widehat{h}(-2\pi\delta^{-1}x)$ yields
\begin{equation*}
    \sum_{\rho} h\Big(\frac{\delta}{2\pi}\frac{\rho-\tfrac 12}{i}\Big) = \delta^{-1}\big(b_1(h) + b_2(h) + I(h)\big) + E_0(\chi)\Big(h\Big(\frac{\delta i }{4\pi}\Big) + h\Big(-\frac{\delta i}{4\pi}\Big)\Big),
\end{equation*}
where
\begin{align*}
    b_1(h) &:= \widehat{h}(0) \Big(\log\big(\mathfrak f(L/K,\chi)\pi^{-n_{K}\chi(1)}2^{-r_{2,K}\chi(1)}\big) \cr&\quad+ \sum_{\mathfrak p|\infty \text{ real}}\Big\{n_+(\mathfrak p) \frac{\Gamma'}{\Gamma}\Big(\frac14\Big)+n_-(\mathfrak p)\frac{\Gamma'}{\Gamma}\Big(\frac 34\Big)\Big\} + 2r_{2,K}\chi(1)\frac{\Gamma'}{\Gamma}\Big(\frac12\Big)\Big),
\cr
    b_2(h) &:= -\sum_{\mathfrak p \in \mathcal O_K}\sum_{m=1}^\infty \frac{\log N_{L/K}\mathfrak p}{N_{L/K}\mathfrak p^{m/2}} \big(\chi(\sigma_{\mathfrak p}^m)\widehat{h}(\delta^{-1}m\log N_{L/K}\mathfrak p) +\bar{\chi}(\sigma_{\mathfrak p}^m) \widehat{h}(-\delta^{-1}m\log N_{L/K}\mathfrak p)\big),
\end{align*}
and
\begin{equation*}
    I(h):=\int_0^\infty \big(2\widehat{h}(0) - \widehat{h}(\delta^{-1}x) - \widehat{h}(-\delta^{-1}x)\big)\Big(\sum_{\mathfrak p|\infty \text{ real}}\frac{n_+(\mathfrak p)\e^{-x/2}+n_-(\mathfrak p)\e^{-3x/2}}{1-\e^{-2x}} +  r_{2,K}\frac{\chi(1)\e^{-x/2}}{1-\e^{-x}}\Big) \dx.
\end{equation*}

Split the integral $I(h)$ into three ranges, $[0,\delta]$, $[\delta,1]$, and $[1,+\infty)$, and denote by $I_1(h)$, $I_2(h)$, and $I_3(h)$ the respective integrals. Then, using that $\widehat{h}(t) \ll e^{-\kappa|t|}, $
\begin{equation*}
    I_3(h) = \widehat{h}(0) \int_1^\infty  \Big(\sum_{\mathfrak p|\infty \text{ real}}\frac{2(n_+(\mathfrak p)\e^{-x/2}+n_-(\mathfrak p)\e^{-3x/2})}{1-\e^{-2x}} +  \frac{2r_{2,K}\chi(1)\e^{-x/2}}{1-\e^{-x}}\Big)\dx + O_h(n_{K}\e^{-\kappa/\delta}).
\end{equation*} 
Moreover,
\begin{equation*}\begin{split}
    I_2&(h)  =  n_{K}\chi(1)\widehat{h}(0) \log(\delta^{-1})\\
    \cr
    &+\widehat{h}(0) 
    \int_0^1 \Bigg(\sum_{\mathfrak p|\infty \text{ real}}\Big\{n_+(\mathfrak p) \Big(\frac{2\e^{-x/2} }{1-\e^{-2x}}-\frac1x \Big)  +n_-(\mathfrak p) 
    \Big(\frac{\e^{-3x/2}}{1-\e^{-2x}}- \frac1x   \Big)\Big\}+
    2r_{2,K}\chi(1)  \Big(\frac{\e^{- x/2 }}{1-\e^{- x}}-\frac1{x}\Big)\Bigg)\dx
    \cr&
-n_{K}\chi(1)\int_{\mathbb R}h(\xi)\int_1^\infty \cos (2\pi x\xi)\frac{\dx}{x}{\rm d}\xi+ O_h\big(n_{K}\chi(1)\delta \big)
\end{split}
\end{equation*}
and
\begin{equation*}\begin{split}
    I_1(h)  = & n_{K}\chi(1)\int_{\mathbb R}h(\xi)\int_0^1 \big(1-\cos (2\pi x\xi)\big)\frac{\dx}{x}{\rm d}\xi+ O_h\big(n_{K}\chi(1)\delta \big)
\end{split}\end{equation*}
with
$$n_{K}\chi(1)= \sum_{\mathfrak p|\infty \text{ real}}(n_+(\mathfrak p)+n_-(\mathfrak p))+2r_{2,K}\chi(1).$$

Gathering these calculations and using 
$$ \int_0^1 \big(1-\cos (2\pi x\xi)\big)\frac{\dx}{x}-\int_1^\infty \cos (2\pi x\xi)\frac{\dx}{x} 
=\log (\pi |\xi|)
+\int_0^1 \big(1-\cos (2  x )\big)\frac{\dx}{x}-\int_1^\infty \cos (2  x )\frac{\dx}{x} ,
$$ we get

\begin{align*}
\delta\sum_{\rho} h\Big(\frac{\delta}{2\pi}\frac{\rho-\tfrac 12}{i}\Big)  = &
n_{K}\chi(1)\widehat{h}(0) \log(\delta^{-1})+n_{K}\chi(1)\int_{\mathbb R}h(\xi)\log |\xi|{\rm d}\xi +\widehat{h}(0)  \log\big(\mathfrak f(L/K,\chi)2^{-r_{2,K}\chi(1)}\big)\\
&+\widehat{h}(0)\Big(\sum_{\mathfrak p|\infty \text{ real}}(n_+C_{1,+} + n_-(\mathfrak p)C_{1,-})+{2}r_{2,K}\chi(1)C_2\Big)+O_h\big(n_{K}\chi(1)\delta\big)
\end{align*}
with 
\begin{align*} 
C_{1,+} &:=   \int_0 ^1\Big( \frac{2\e^{-  x/2 }}{1-\e^{-2 x}}-\frac 1x\Big)\dx +\int_1^\infty \frac{2\e^{-  x/2 }}{1-\e^{-2 x}}\dx  + 
\frac{\Gamma'}{\Gamma}\Big(\frac 14\Big)   +  \int_0^1 (1-\cos (2  x ))\frac{\dx}{x}-\int_1^\infty \cos (2  x )\frac{\dx}{x}\\
C_{1,-} &:=   \int_0 ^1\Big( \frac{2\e^{-  3x/2 }}{1-\e^{-2 x}}-\frac 1x\Big)\dx +\int_1^\infty \frac{2\e^{-3x/2 }}{1-\e^{-2 x}}\dx  + 
\frac{\Gamma'}{\Gamma}\Big(\frac 34\Big)   +  \int_0^1 (1-\cos (2  x ))\frac{\dx}{x}-\int_1^\infty \cos (2  x )\frac{\dx}{x}\\
C_2 &:=   \int_0 ^1\Big( \frac{ \e^{-   x/2 }}{1-\e^{-  x}}-\frac 1x\Big)\dx +\int_1^\infty \frac{ \e^{-  x/2 }}{1-\e^{-  x}}\dx  +  
\frac{\Gamma'}{\Gamma}\Big(\frac 12\Big)   +  \int_0^1 (1-\cos (2  x ))\frac{\dx}{x}-\int_1^\infty \cos (2 x )\frac{\dx}{x}.
\end{align*}
In \cite{MR4322621-conjecture-Montgomery-Soundararajan-integers}, it was shown that $C_{1,+} = 0$. To calculate $C_{1,-}$, note that by \cite[\S II.0, Exercise 149]{MR3363366-tenenbaum}, 
\[\frac{\Gamma'}{\Gamma}\Big(\frac 34\Big)=
\int_0^\infty \Big(\frac{ \e^{-2x }}{x}- \frac{2\e^{-  3x/2 }}{1-\e^{-2 x}}\Big) \dx,\]
and thus
\[C_{1,-} = \int_0^\infty \frac{\e^{-2x}- \cos(2x)}{x} \dx = 0.\]
To calculate $C_2$, we use the  identity \cite[\S II.0, Exercise 149]{MR3363366-tenenbaum}
 $$ 
 \frac{\Gamma'}{\Gamma}\Big(\frac 12\Big)=
\int_0^\infty \Big(\frac{ \e^{-2x }}{x}-  \frac{2 \e^{-   x  }}{1-\e^{-2 x}}\Big) \dx 
=
\int_0^\infty \Big(\frac{ \e^{-2x }}{x}-  \frac{ \e^{-   x/2  }}{1-\e^{-  x}}\Big) \dx 
$$ 
and we get
\begin{align*}
C_2&=\int_0 ^1  \frac {\e^{-2x}-1}x \dx +\int_1^\infty \frac {\e^{-2x}}x  \dx +  \int_0^1 (1-\cos (2  x ))\frac{\dx}{x}-\int_1^\infty \cos (2  x )\frac{\dx}{x} 
 =\int_0 ^\infty  \frac {\e^{-2x}-\cos(2x)}x \dx=0.
 \end{align*}
The claimed estimate follows. 
\end{proof}

We can apply Lemma \ref{lemma3.4} to estimate the sums
\begin{equation}\label{defnuj}
    \nu_j(K;\eta,\delta):=\sum_{\chi\in\Irr(\Gal(K/\Q))}     \chi(1)^j b_0(\chi;|\widehat \eta|^2,\delta) .
\end{equation}
In order to do so, we recall the definitions \eqref{def:lambda}, \eqref{def:qz} and \eqref{def:alphabeta} concerning character sums.

\begin{lemma}\label{lemma:estnuj} Let $K/\Q$ be a Galois extension of number fields with $G:=\Gal(K/\Q)$. Assume the Artin holomorphy conjecture and GRH. 
    Let $j\geq 2,$ $\kappa > 0$,   $\eta\in\mathcal E_\kappa  $. We have 
  \begin{align*}
\nu_j(K;\eta,\delta)  = &\frac{1}{\delta}\Big(
\lambda_{j+1}(G)\big(\alpha(\eta) \log(\delta^{-1})+ \beta(\eta)\big)+\alpha(\eta) q_{j+1}(K)-\alpha(\eta)z_{j+1}(K) \Big) +O_\eta\big(\lambda_{j+1}(G) \big).
\end{align*}  
In particular, if $\delta\ll {\rm rd}_K^{-c}$ for suitable large constant $c>0,$ we have
\begin{align*}
\nu_j(K;\eta,\delta)  = &\frac{\lambda_{j+1}(G)\alpha(\eta)}{\delta}\Big(
  \log(\delta^{-1})+    O_\eta\big(\log ({\rm rd}_K)\big)\Big)\asymp
  \frac{\lambda_{j+1}(G)\alpha(\eta)}{\delta} 
  \log(\delta^{-1}) .
\end{align*} 
\end{lemma}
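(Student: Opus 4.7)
The strategy is a direct application of Lemma \ref{lemma3.4} to each Artin $L$-function $L(s,\chi,K/\Q)$ with the test function $h := |\widehat{\eta}|^2$, followed by summation against the weights $\chi(1)^j$. First I would verify that $h$ satisfies the hypotheses of Lemma \ref{lemma3.4}: the bound $\widehat{\eta}(\xi) \ll (|\xi|+1)^{-1}\log(|\xi|+2)^{-2-\kappa}$ from the definition of $\mathcal{E}_\kappa$ gives $0 \le h(\xi) \ll (|\xi|+1)^{-2}\log(|\xi|+2)^{-4-2\kappa}$, and since $\eta$ is real and even, $\widehat h = \eta * \eta$, which (together with its derivative) inherits exponential decay from the bounds on $\eta$ and $\eta'$. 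Two identifications will be used repeatedly: by Plancherel, $\widehat h(0) = \int |\widehat\eta|^2\,\mathrm d\xi = \alpha(\eta)$, and by definition \eqref{def:alphabeta}, $\int h(\xi)\log|\xi|\,\mathrm d\xi = \beta(\eta)$.

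Applying Lemma \ref{lemma3.4} character by character then yields
\[\delta\, b(\chi; |\widehat\eta|^2, \delta) = \chi(1)\alpha(\eta)\log(\delta^{-1}) + \chi(1)\beta(\eta) + \alpha(\eta)\log\!\bigl(\mathfrak f(K/\Q,\chi)\, 2^{-2r_{2,K}\chi(1)}\bigr) + O_\eta(\chi(1)\delta).\]
Multiplying by $\chi(1)^j$ and summing over $\chi \in \Irr(G)$ assembles the first two pieces into $\lambda_{j+1}(G)\bigl(\alpha(\eta)\log(\delta^{-1}) + \beta(\eta)\bigr)$ via the identity $\sum_{\chi}\chi(1)^{j+1} = \lambda_{j+1}(G)$, and the conductor contribution into $\alpha(\eta) q_{j+1}(K)$ by the definition \eqref{def:qz}, while the error sums to $O_\eta(\lambda_{j+1}(G)\delta)$. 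To pass from $b(\chi;\cdot,\delta)$ to $b_0(\chi;\cdot,\delta)$, I would use \eqref{def:b0hdelta} to remove the contribution from any zero at the central point $\rho = 1/2$, whose multiplicity is $z(K,\chi)$; collecting these terms via the definition of $z_{j+1}(K)$ in \eqref{def:qz} produces the $-\alpha(\eta)z_{j+1}(K)$ correction inside the bracket. Dividing through by $\delta$ then yields the claimed formula for $\nu_j(K;\eta,\delta)$.

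For the ``In particular'' assertion, I would invoke the bounds $|z_{j+1}(K)| \le q_{j+1}(K) \le 2\lambda_{j+1}(G)\log(\mathrm{rd}_K)$ recalled after \eqref{def:qz}. Under the hypothesis $\delta \ll \mathrm{rd}_K^{-c}$ with $c$ a sufficiently large constant depending only on $\eta$, one has $\log(\delta^{-1}) \ge c \log(\mathrm{rd}_K)$, so each of the secondary terms $\alpha(\eta) q_{j+1}(K)/\delta$ and $\alpha(\eta) z_{j+1}(K)/\delta$ is dominated by the leading term $\alpha(\eta)\lambda_{j+1}(G)\log(\delta^{-1})/\delta$; the residual $O_\eta(\lambda_{j+1}(G))$ error is similarly subsumed. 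The main technical step requiring care is the bookkeeping of the central-zero correction together with the verification that $h = |\widehat\eta|^2$ lies in the admissible class for Lemma \ref{lemma3.4}; once those are handled, the rest of the argument is a direct combinatorial rearrangement of sums indexed by the characters of $G$.
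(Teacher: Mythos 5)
Your proof follows the paper's argument essentially verbatim: apply Lemma~\ref{lemma3.4} to the extension $K/\Q$ with $h=|\widehat\eta|^2$, identify $\widehat h(0)=\alpha(\eta)$ and $\int h\log|\xi|=\beta(\eta)$, remove the central-point contribution to pass to $b_0$, sum against $\chi(1)^j$ to produce $\lambda_{j+1}(G)$, $q_{j+1}(K)$, and $z_{j+1}(K)$, and then use $z_{j+1}(K)\ll q_{j+1}(K)\le 2\lambda_{j+1}(G)\log(\mathrm{rd}_K)$ to absorb the secondary terms when $\delta\ll\mathrm{rd}_K^{-c}$. The one place where you are a little too quick is in treating the bound $z_{j+1}(K)\ll q_{j+1}(K)\le 2\lambda_{j+1}(G)\log(\mathrm{rd}_K)$ as ``recalled'': the paper actually \emph{proves} that chain inside this lemma's proof (via \cite[Prop.~5.21, (5.27)]{MR2061214-iwaniec-kowalski} for $z(K/\Q,\chi)\ll\log\mathfrak f(K/\Q,\chi)$ and \cite[Lemma~4.1]{MR4400872-fiorilli-jouve-chebyshev} for $\log\mathfrak f(K/\Q,\chi)\le 2\chi(1)n_K\log(\mathrm{rd}_K)$), so a complete write-up should supply that short derivation rather than cite it as known.
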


\begin{proof}
Following \cite[Proposition 5.21]{MR2061214-iwaniec-kowalski}, under GRH, we have
$$z(K/\Q,\chi)\ll \frac{\log \mathfrak f(K/\Q,\chi)}{\log (3\log \mathfrak f(K/\Q,\chi)/n_{K})}\ll \log \mathfrak f(K/\Q,\chi).$$
In fact, the bound  $z(K/\Q,\chi)\ll \log \mathfrak f(K/\Q,\chi) $ is unconditional  (see \cite[(5.27)]{MR2061214-iwaniec-kowalski}). For what follows, we will only need the unconditional bound.
Thus
$ z_j(K)\ll q_j(K) $.
Using   $\mathfrak f(L/K,\chi)\leq 2\chi(1)n_{K} \log ({\rm rd}_K) $  (see \cite[Lemma 4.1]{MR4400872-fiorilli-jouve-chebyshev}),
we get
\begin{equation}\label{encadrementj}
    z_j(K)\ll q_j(K)\leq 2\lambda_j(G) \log ({\rm rd}_K),
\end{equation}
where the factor of $n_{\mathbb Q}$ is simply $1$.
Now apply Lemma \ref{lemma3.4} for the extension $K/\Q$ (in place of $L/K$) and for $h=\widehat\eta^2$ with $\widehat{h}(0)=\alpha(\eta)$ to get
\begin{align*}b_0(\chi;|\widehat \eta|^2,\delta)&=\frac{1}{\delta}\Big(
\chi(1)\big(\alpha(\eta) \log(\delta^{-1})+ \beta(\eta)\big) +\alpha(\eta)  \log\big( \mathfrak f(K/\Q,\chi)\big)-\alpha(\eta)z(K/\Q,\chi)\Big)  
+O_h\big( \chi(1) \big).
\end{align*} 
Summing over $\chi\in G$ gives
  \begin{align*}
\nu_j(K;\eta,\delta)  = &\frac{1}{\delta}\Big(\lambda_{j+1}(G)\big(\alpha(\eta) \log(\delta^{-1})+ \beta(\eta)\big)+\alpha(\eta) q_{j+1}(K)-\alpha(\eta)z_{j+1}(K) \Big) +O_\eta\big(\lambda_{j+1}(G) \big).
\end{align*} 

This is precisely the first statement of Lemma \ref{lemma:estnuj}. The second statement follows by determining a range of $\delta$ for which the main term dominates. In particular, \eqref{encadrementj} bounds nearly all of the terms in our expansion of $\nu_j$ by $\log({\rm rd}_K)$, and the remaining terms are bounded in terms of $\delta$. Thus for any fixed $\eta$, if $\delta \ll {\rm rd}_K^{-c}$ for a suitable large constant $c > 0$, then
\begin{align*}
\nu_j(K;\eta,\delta)  = &\frac{ \lambda_{j+1}(G)\alpha(\eta)}{\delta}\Big(
  \log(\delta^{-1})+    O_\eta\big(\log ({\rm rd}_K)\big)\Big)\asymp
  \frac{ \lambda_{j+1}(G)\alpha(\eta)}{\delta} 
  \log(\delta^{-1}) .
\end{align*}

\end{proof}


\section{Proof of Theorem \ref{thm:main-theorem}}

Assume the Artin holomorphy conjecture and GRH. Beginning with \eqref{eq:weighted-nth-moment-definition}, apply Lemma \ref{lemma explicit formula}. Any zero $\rho $ can be written $\rho=\tfrac 12+i\gamma$. Since $\widehat \Phi$ and $  \Phi$  are even and  real-valued, 
\begin{align*}
 (-1)^n   M_{n,K}(&X, \delta; \eta,\Phi)=\frac {(-1)^n}{ \log X\int_0^\infty \Phi}\int_{0}^{\infty} \Phi\Big( \frac {t}{\log X}\Big)\big(\psi_\eta(\e^t,\delta)-\e^{\frac t2}\delta \mathcal L_\eta( \tfrac \delta 2 ) +\delta\widehat \eta(0) {\rm ord}_{s=\frac 12}\zeta_K(s,\chi)\big)^{n} {\rm d} t   \\
&=  \frac{\delta^n}{\int_0^\infty \Phi} \sum_{\gamma_1,\dots,\gamma_n\neq 0} \widehat \eta\Big(\frac{\delta\gamma_1}{2\pi}\Big)\cdots \widehat\eta\Big(\frac{\delta\gamma_n}{2\pi}\Big)\int_0^{\infty} \e^{i t  (\gamma_1+\dots+\gamma_n)\log X }\Phi(t) {\rm d} t +O\Big( \frac{ \delta (C_{\eta}n_K \log(\delta^{-1}+2))^n}{\log X }\Big)\\
&= \frac{ \delta^n}{2\int_0^\infty \Phi} \sum_{\gamma_1,\dots,\gamma_n\neq 0} \widehat \Phi\Big(\frac {  (\gamma_1+\dots+\gamma_n)\log X}{2\pi}\Big)\widehat \eta\Big(\frac{\delta\gamma_1}{2\pi}\Big)\cdots \widehat\eta\Big(\frac{\delta\gamma_n}{2\pi}\Big) +O\Big( \frac{ \delta (C_{\eta}n_K \log(\delta^{-1}+2))^n}{\log X} \Big).
\end{align*}

For even  $n=2m$, by positivity of $\widehat \Phi$ and $\widehat \eta$, we have
\begin{equation}
    \label{geqM2m}
     M_{2m,K}(X, \delta; \eta,\Phi)\geq   { \delta^{2m}} S_{2m} +O\Big( \frac{ \delta (C_{\eta}n_K \log(\delta^{-1}+2))^{2m}}{\log X} \Big),
\end{equation}
with
\begin{equation}\label{eq:S2m-definition}
S_{2m}:=\sum_{\substack{\gamma_1, \dots, \gamma_{2m}\neq 0 \\ \gamma_1 + \cdots + \gamma_{2m} = 0}} \widehat{\eta}\Big(\frac{\delta\gamma_1}{2\pi}\Big)\cdots \widehat{\eta}\Big(\frac{\delta\gamma_{2m}}{2\pi}\Big),
\end{equation}
whereas for odd $n=2m+1$, by positivity of $\widehat \Phi$ and $\widehat \eta$,
\begin{equation}
    \label{geqM2m+1}
      -M_{2m+1,K}(X, \delta; \eta,\Phi)\geq   O\Big( \frac{ \delta (C_{\eta} n_K\log(\delta^{-1}+2))^{n}}{\log X} \Big).
\end{equation}
 
 We shall prove the following lower bound to study the even case.
\begin{lemma}
\label{lemma S2mgeq} 
Let $K/\Q$ be a Galois extension of number fields with $G:=\Gal(K/\Q)$. Assume the Artin holomorphy conjecture and GRH. Let  $\eta\in\mathcal E_\kappa  $ fixed. Then, for  $m\geq 1$ and $\delta>0$
 $$S_{2m}\geq \mu_{2m}\nu_2(K;\eta,\delta)^m \Big(1+O_\eta\Big(m^2 m!\frac{\nu_4(K;\eta,\delta)}{  \nu_2(K;\eta,\delta)^2} \Big)\Big),$$ where  $S_{2m}$ is defined  by \eqref{eq:S2m-definition}. 
\end{lemma}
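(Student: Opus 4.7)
The plan exploits positivity: since $\eta\in\mathcal{E}_\kappa$ forces $\widehat\eta\geq 0$, every term in $S_{2m}$ is nonnegative, so any lower bound may be obtained by restricting to a convenient subfamily of zero-tuples. I will isolate the ``paired diagonal'', where the $\gamma_k$'s cancel in pairs according to some perfect matching of $\{1,\ldots,2m\}$; this already produces the Gaussian main term $\mu_{2m}\nu_2^m$, and the remaining contribution is bounded via Bonferroni.

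To set up, apply $\zeta_K(s)=\prod_{\chi\in\Irr(G)}L(s,\chi,K/\Q)^{\chi(1)}$ and record each $\gamma_k$ as a zero of some $L(s,\chi_k)$ with inherited multiplicity $\chi_k(1)$:
\[
S_{2m}=\sum_{\chi_1,\ldots,\chi_{2m}}\prod_{k=1}^{2m}\chi_k(1)\sum_{\substack{L(\tfrac12+i\gamma_k,\chi_k)=0,\ \gamma_k\neq 0\\ \gamma_1+\cdots+\gamma_{2m}=0}}\prod_{k=1}^{2m}\widehat\eta\Big(\frac{\delta\gamma_k}{2\pi}\Big).
\]
For each perfect matching $P$ of $\{1,\ldots,2m\}$ (there are $\mu_{2m}=(2m-1)!!$ of them), let $T_P$ denote the weighted subsum over tuples satisfying $\gamma_i=-\gamma_j$ and $\chi_j=\bar\chi_i$ for each pair $(i,j)\in P$; this is consistent since $-\gamma$ is a zero ordinate of $L(s,\bar\chi)$ whenever $\gamma$ is one of $L(s,\chi)$. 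The pair weight becomes $\chi_i(1)\chi_j(1)=\chi(1)^2$, and by evenness of $\widehat\eta$ the per-pair sum reduces to $\sum_{\chi}\chi(1)^2 b_0(\chi;\widehat\eta^2,\delta)=\nu_2(K;\eta,\delta)$, so $|T_P|=\nu_2^m$.

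Next, apply the second Bonferroni inequality: since all summands are nonnegative,
\[
S_{2m}\geq\Big|\bigcup_{P}T_P\Big|\geq\sum_{P}|T_P|-\sum_{P\ne P'}|T_P\cap T_{P'}|=\mu_{2m}\nu_2^m-\sum_{P\ne P'}|T_P\cap T_{P'}|.
\]
For distinct matchings $P,P'$, the symmetric difference $P\triangle P'$ decomposes into disjoint even cycles of length $\geq 4$; along a cycle of length $2\ell$, the chained pairings $\gamma_{i_1}=-\gamma_{i_2}=\gamma_{i_3}=\cdots$ force the $2\ell$ involved indices to collapse onto a single zero of some $L(s,\chi)$ (with weight $\chi(1)^{2\ell}$), contributing at most $\sum_{\chi}\chi(1)^{2\ell}\sum_{\gamma\ne 0}\widehat\eta(\delta\gamma/2\pi)^{2\ell}\leq\|\widehat\eta\|_\infty^{2\ell-2}\nu_{2\ell}$. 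The leading case---a single $4$-cycle---contributes $\|\widehat\eta\|_\infty^2\nu_4\nu_2^{m-2}$ per matching pair, with $O(\mu_{2m}m^2)$ such ordered pairs, giving a leading relative error $O_\eta(m^2\nu_4/\nu_2^2)$.

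The main obstacle is controlling the full tail of the Bonferroni sum: enumerating matching pairs by the cycle structure of $P\triangle P'$ (multiple $4$-cycles, one long cycle, or arbitrary mixtures) and aggregating the resulting bounds $\nu_{2\ell_1}\cdots\nu_{2\ell_r}\nu_2^{m-\ell_1-\cdots-\ell_r}$. Combining $\nu_{2\ell}\ll_\eta\|\widehat\eta\|_\infty^{2\ell-2}\nu_2$ with partition-type counts of the form $\binom{m}{k}\mu_{2k}$ for matching pairs whose symmetric difference has $2k$ edges yields an aggregate bound $O(\mu_{2m}m^2m!\,\nu_4\nu_2^{m-2})$; dividing by $\mu_{2m}\nu_2^m$ produces the stated relative error $O_\eta(m^2m!\,\nu_4/\nu_2^2)$. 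The delicate point is verifying that the factorial combinatorial growth in the tail is truly absorbed by the small ratio $\nu_4/\nu_2^2$, which will hold in the relevant regime (small $\delta$ and moderate $m$) via the estimates of Lemma~\ref{lemma:estnuj}.
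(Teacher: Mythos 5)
The high-level strategy---exploit $\widehat\eta\geq 0$ to restrict $S_{2m}$ to a convenient subfamily, take the ``paired diagonal'' as the main term, and control overcounting by inclusion--exclusion---is the same philosophy as the paper. But the execution diverges, and the argument as written has a genuine gap, for two reasons.

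First, the organizing principle differs. You run a Bonferroni over perfect matchings of $\{1,\dots,2m\}$, whereas the paper stratifies by the distribution of the $2m$ characters over the irreducible classes $\{\psi_j,\overline{\psi_j}\}$ (the quantity $D(\k)$ in \eqref{Dk}) and then, for each class, invokes Lemma~\ref{lemma pre combinatoire} (itself imported from Lemma~5.5 of \cite{dfjouve-general-class-fcns}). That imported lemma is where the $\ell!$ and $\ell!^2\ell(\ell-1)$ factors enter, which is precisely the source of the $m!$ in the error term. In your sketch the $m!$ is asserted to ``emerge'' from aggregating cycle types of $P\triangle P'$, but the counts you quote ($\binom{m}{k}\mu_{2k}$ for pairs with $2k$ symmetric-difference edges, and $O(\mu_{2m}m^2)$ for the $4$-cycle case) do not obviously assemble into $m^2 m!\mu_{2m}$, and this is the entire nontrivial content of the lemma. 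Saying ``yields an aggregate bound $O(\mu_{2m}m^2 m!\,\nu_4\nu_2^{m-2})$'' is the step that needs to be proved, not assumed.

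Second, and more concretely, the bound $\nu_{2\ell}\ll_\eta\|\widehat\eta\|_\infty^{2\ell-2}\nu_2$ used to control long cycles is false for nonabelian $G$. Recall from \eqref{defnuj} that both $\nu_{2\ell}$ and $\nu_2$ are weighted by $\widehat\eta^2$ against the zeros; only the character weights differ. Since every $\chi(1)\geq 1$, one has $\nu_{2\ell}\geq\nu_2$ automatically, and for a character of degree $d>1$ one gets $\nu_{2\ell}/\nu_2\gtrsim d^{2\ell-2}$, which grows without bound in $\ell$. The $\|\widehat\eta\|_\infty^{2\ell-2}$ factor you extracted (from $\widehat\eta^{2\ell}\leq\|\widehat\eta\|_\infty^{2\ell-2}\widehat\eta^2$) lands you at $\nu_{2\ell}$, not back at $\nu_2$; it does nothing to tame the $\chi(1)^{2\ell}$ weights. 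What you would actually need is something like $\nu_{2\ell}\leq\nu_4\nu_2^{\ell-2}$, which does hold when $b_0(\chi;\widehat\eta^2,\delta)\gg 1$ (small $\delta$), but that is an extra hypothesis and requires its own argument---and the lemma is stated for all $\delta>0$. In short, the key combinatorial and analytic inputs that produce the $m^2 m!$ and the single $\nu_4$ factor are missing; you would essentially need to reconstruct the content of \cite[Lemma~5.5]{dfjouve-general-class-fcns}, which the paper instead cites and adapts.
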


Its proof, which we present in Section \ref{sec:pfoflemmaS2mgeq} relies on an application of Lemma~5.5 of \cite{dfjouve-general-class-fcns}.

Applying Lemma \ref{lemma S2mgeq} to \eqref{geqM2m} gives
\begin{align*}
M_{2m,K}(X, \delta; \eta,\Phi)&\geq    \mu_{2m}\big(\delta^2\nu_2(K;\eta,\delta)\big)^m \Big(1+O\Big(m^2 m!\frac{\nu_4(K;\eta,\delta)}{  \nu_2(K;\eta,\delta)^2} \Big)\Big) +O\Big(n_K^{2m}\frac{ \delta (C'_{\eta} \log(\delta^{-1} ))^{2m}}{\log X} \Big).
\end{align*}
Lemma \ref{lemma:estnuj} implies that
when $\delta \leq {\rm rd}_K^{-c_\eta}$
$$\delta \nu_2(K;\eta,\delta)  \asymp
   { \lambda_{3}(G)\alpha(\eta)} 
  \log(\delta^{-1}),\qquad \frac{\nu_4(K;\eta,\delta)}{  \nu_2(K;\eta,\delta)^2}\asymp \frac{\delta}{\log (1/\delta)}\frac{\lambda_5(G)}{ \lambda_3(G)^2}
.$$
Thus,
\begin{align*}
M_{2m,K}(X, \delta; \eta,\Phi)&\geq    \big(\delta^2\nu_2(K;\eta,\delta))^m  \Big\{\mu_{2m}+O\Big(\frac{m^2 m!\mu_{2m}\delta}{\log (1/\delta)}\frac{\lambda_5(G)}{\lambda_3(G)^2} \Big)\Big\} +O\Big(\frac{ \delta (C'_{\eta}n_K  \log(\delta^{-1} ))^{2m}}{\log X} \Big)\cr
&\geq     V(K;\eta,\delta)^m  \Big\{\mu_{2m}+O\Big(\frac{m^2 m!\mu_{2m}\delta}{\log (1/\delta)}\frac{\lambda_5(G)}{\lambda_3(G)^2} +\Big( \frac{C_{\eta}n_K^2 \log(\delta^{-1} )}{\lambda_3(G)} \Big)^{ m}\frac{ \delta^{1-m} }{\log X} \Big)\Big\}
\end{align*}
where $C_{\eta}=(C'_{\eta})^2/\alpha(\eta)$ and $V(K;\eta,\delta):=\delta^2\nu_2(K;\eta,\delta).$ By Lemma \ref{lemma:estnuj}, this completes the proof of  Theorem~\ref{thm:main-theorem}.

\section{Proof of Lemma \ref{lemma S2mgeq}}\label{sec:pfoflemmaS2mgeq}

The following lemma is an application of lemma~5.5 of \cite{dfjouve-general-class-fcns}. It makes use of the classification of irreducible characters $\chi$ of $G$ according to their \emph{Frobenius--Schur indicator} $\varepsilon_2(\chi)$ (see 
for example \cite[Theorem 8.7]{MR1645304-Huppert}).

\begin{lemma}
\label{lemma pre combinatoire}
  For $\ell\in \mathbb N$, let $\eta\in \mathcal S_\delta$, $\psi\in \Irr(G)$, and let $\chi_1,\dots,\chi_{2\ell} \in \{ \psi, \overline{\psi} \}$. 
 If $\psi$ is unitary (that is, $\varepsilon_2(\psi)=0$) then there exists a constant $C_\eta$ such that we have the estimate
\begin{equation*}
 \sum_{\substack{\gamma_{\chi_1},...,\gamma_{\chi_{\ell }}> 0 \\
 \gamma_{\chi_{\ell+1}},...,\gamma_{\chi_{2\ell }}< 0 \\  \forall \gamma\in \R,\\ \# \{ k\leq 2\ell  : \chi_k\in \{ \psi, \overline{\psi} \} , \gamma_{\chi_k}= \gamma \} = \\ \# \{ k\leq 2\ell : \chi_k\in \{ \psi, \overline{\psi} \}, \gamma_{\chi_k}= -\gamma \}  }} \!\!\!\!\!\!\!\!\! \prod_{k=1}^{2\ell}\widehat \eta\Big(\frac{\delta \gamma_{\chi_k}}{2\pi}\Big)
\geq \max\Big\{
\ell! b_0(\psi;|\widehat \eta|^2,\delta)^\ell -C_\eta \ell!^2\ell(\ell-1)   b_0(\psi;|\widehat \eta|^2,\delta)^{\ell-1}  ,0\Big\},
\end{equation*}
where the $\gamma_{\chi_j}$ run through the multiset of imaginary parts of the zeros of $L(s,L/F,\psi)L(s,L/F,\overline\psi)$ (with multiplicity).

If $\psi$ is either orthogonal or symplectic (that is, $\varepsilon_2(\psi)\in \{ \pm 1\}$), there exists a constant $C_\eta$ such that 
$$ \sum_{\substack{\gamma_{1},\dots,\gamma_\ell>0 \\ \gamma'_{1},\dots, \gamma'_{\ell} < 0 \\  \forall \gamma\in \R,\\ \# \{ k\leq  \ell  :  \gamma_{ k}= \gamma \} = \\ \# \{ k\leq  \ell :  \gamma'_{ k}= -\gamma \}  }} \prod_{k=1}^{\ell}\widehat \eta\Big(\frac{\delta \gamma_{ k}}{2\pi}\Big)\widehat \eta\Big(\frac{\delta \gamma'_{ k}}{2\pi}\Big)
\geq \max\Big\{2^{-\ell}\ell! b_0(\psi;|\widehat \eta|^2,\delta)^\ell -C_\eta 2^{-\ell}\ell!^2\ell(\ell-1)    b_0(\psi;|\widehat \eta|^2,\delta)^{\ell-1} ,0\Big\},
$$
where the $\gamma_1,\dots,\gamma_\ell, \gamma'_1,\dots ,\gamma'_\ell$ run through the imaginary parts of the zeros of $L(s,L/F,\psi)$ (with multiplicity).
\end{lemma}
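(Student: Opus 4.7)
The plan is to extract the ``diagonal'' contribution to each sum by pairing positive imaginary parts with their negative twins, and then invoke Lemma~5.5 of~\cite{dfjouve-general-class-fcns} to control the overcount from near-diagonal configurations. Since $\eta$ is even and real, $\widehat\eta$ is even and real, and so every summand is nonnegative; this both justifies the outer $\max\{\cdot,0\}$ in the statement and permits the discarding of off-diagonal contributions when lower-bounding.

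First I would unwind the combinatorial constraint. The balance condition forces the multiset $\{\gamma_{\chi_k}\}_{k=1}^{2\ell}$ to be symmetric about~$0$, and since exactly $\ell$ entries are positive and $\ell$ negative, there must exist at least one bijection $\sigma:\{1,\dots,\ell\}\to\{\ell+1,\dots,2\ell\}$ with $\gamma_{\chi_{\sigma(k)}}=-\gamma_{\chi_k}$ for every $k\le\ell$. Each such matching, when it is the unique matching of the given configuration, contributes a product of $\ell$ factors of the form $|\widehat\eta(\delta\gamma_{\chi_k}/(2\pi))|^2$, all nonnegative. Summing over the $\ell!$ choices of $\sigma$ (restricted to ``generic'' configurations where the matching is unique) produces the main term.

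Next I would identify the single-pair factor. In the unitary case, the constraint $\gamma_{\chi_{\sigma(k)}}=-\gamma_{\chi_k}$ forces $\chi_{\sigma(k)}=\bar\chi_k$, since zeros of $L(s,\bar\psi)$ are precisely the negations of zeros of $L(s,\psi)$. The sum of $|\widehat\eta|^2$ over positive zeros of $L(s,\psi)L(s,\bar\psi)$ therefore reconstructs the full quantity $b_0(\psi;|\widehat\eta|^2,\delta)$, yielding the main term $\ell!\,b_0(\psi;|\widehat\eta|^2,\delta)^\ell$. In the self-dual case ($\psi=\bar\psi$, so $\varepsilon_2(\psi)=\pm1$), the zeros of $L(s,\psi)$ themselves come in symmetric pairs $\pm\gamma$, so summing $|\widehat\eta|^2$ over only the positive zeros gives $\tfrac12 b_0(\psi;|\widehat\eta|^2,\delta)$; this halving in each of the $\ell$ pair-sums accounts for the factor~$2^{-\ell}$.

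The main obstacle is controlling the overcount error. When two or more of the $\gamma_k$ coincide, several bijections $\sigma$ satisfy the matching condition, so the naive enumeration over $\sigma$ over-represents these configurations and the discrepancy must be subtracted. This is precisely the situation treated by Lemma~5.5 of~\cite{dfjouve-general-class-fcns}: fixing one colliding pair of indices is an $O(\ell(\ell-1))$ choice, the remaining $\ell-1$ free pair-sums are bounded by $b_0(\psi;|\widehat\eta|^2,\delta)^{\ell-1}$, and an $(\ell!)^2$ factor accounts for enumerating matchings before and after the collapse. Combining these yields the stated correction $C_\eta\,\ell!^2\ell(\ell-1)\,b_0(\psi;|\widehat\eta|^2,\delta)^{\ell-1}$, with $C_\eta$ absorbing $\sup|\widehat\eta|$ and analogous constants. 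Tracking multiplicities of the zeros, which the Artin holomorphy conjecture alone does not bound, is the delicate step, but this is exactly the content of the cited combinatorial lemma.
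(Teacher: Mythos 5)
The paper gives no proof of this lemma at all: it simply remarks that the statement ``is an application of lemma~5.5 of \cite{dfjouve-general-class-fcns}'' and moves on. Your proposal does the same thing at the crucial step (invoking Lemma~5.5 of that reference to handle the combinatorial overcount), but you additionally sketch the bookkeeping that justifies the reduction to that lemma, so the comparison is between your sketch and an implicit argument the paper leaves to the reader.

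Your sketch is structurally sound: pairing positive with negative imaginary parts gives a matching parameterization, summing over the $\ell!$ matchings reproduces $\ell!\,b_0(\psi;|\widehat\eta|^2,\delta)^\ell$ in the unitary case (where the sum of $|\widehat\eta|^2$ over positive zeros of $L(s,\psi)L(s,\overline\psi)$ equals $b_0(\psi;|\widehat\eta|^2,\delta)$ by the symmetry $Z(\overline\psi)=-Z(\psi)$) and $2^{-\ell}\ell!\,b_0(\psi;|\widehat\eta|^2,\delta)^\ell$ in the self-dual case (where the sum over positive zeros of $L(s,\psi)$ is $\tfrac12 b_0(\psi;|\widehat\eta|^2,\delta)$), and the discrepancy from non-generic configurations is what must be subtracted. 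Two small imprecisions worth flagging: (i) the constraint does not literally force $\chi_{\sigma(k)}=\overline{\chi_k}$ --- a positive zero of $L(s,\overline\psi)$ can coincidentally equal the negation of a positive zero of $L(s,\overline\psi)$ as well --- but since the summand depends only on the $\gamma$'s and not on the labels $\chi_k$, this does not affect the count; and (ii) the phrase ``restricted to generic configurations'' for the main term is loose, since $\ell!\,b_0^\ell$ is the sum over all $(\gamma_1,\dots,\gamma_\ell,\sigma)$ including repeated $\gamma$'s, and it is precisely this over-representation of collisions that is later subtracted. Your explanation of the $(\ell!)^2$ factor in the error is vague, but since a direct estimate bounds the overcount by $O_\eta(\ell!\,\ell(\ell-1)\,b_0^{\ell-1})$, which is stronger than what the lemma asserts, this looseness is harmless.
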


\begin{proof}[Proof of Lemma~\ref{lemma S2mgeq}]
Let 
$$\Irr(G)=\big\{ \psi_1,\psi_2,\ldots,\psi_{r_1},\psi_{r_1+1},\overline{\psi_{r_1+1}},\psi_{r_1+2}, \ldots,\psi_{r_1+r_2},\overline{\psi_{r_1+r_2}}\big\},$$
where $\psi_1,\dots \psi_{r_1}$ are real and $\psi_{r_1+1},\dots ,\psi_{r_1+r_2}$ are complex.  

Using the methods of \cite{dfjouve-general-class-fcns}, we prove a lower bound of the sum $S_{2m}.$
The main additional difficulties come from the fact that the zeroes of $\zeta_K(s)$ are not always simple. However, assuming the Artin holomorphy conjecture we can nevertheless find a lower bound by considering only ``expected'' multiplicities of the zeroes of $\zeta_K(s)$. To begin with,
$$S_{2m} =\sum_{\bchi=(\chi_1,\dots \chi_{2m})\in \Irr(G)^{2m}}\Big(\prod_{j=1}^{2m}  \chi_j(1)\Big)
\sum_{\substack{\gamma_1, \dots, \gamma_{2m}\neq 0 \\ \gamma_1 + \cdots + \gamma_{2m} = 0\\ L( 1/2+i\gamma_j,\chi_j,L/K)=0}} \widehat{\eta}\Big(\frac{\delta\gamma_1}{2\pi}\Big)\cdots \widehat{\eta}\Big(\frac{\delta\gamma_{2m}}{2\pi}\Big) .$$ 
In the inner sum the zeroes of $L(\tfrac 12+i\gamma_j,\chi_j,L/K)$ are counted with multiplicity, whereas in the definition of $S_{2m}$ the multiplicity was 
that of $L(\tfrac 12+i\gamma_j,\chi_j,L/K)^{\chi_j(1)}.$

Given a vector $\bchi=(\chi_1,\dots \chi_{2m}) \in \Irr (G)^{2m}$ and $1 \le j \le r_1+r_2$, define
$$ E_j(\bchi):= \big\{1\leq  k \leq 2m \colon \chi_k \in\{ \psi_j,\overline{\psi_j} \}  \big\}, $$
and define $\ell_j(\bchi):=|E_j(\bchi)|$ to be the size of $E_j(\bchi)$. Note that $ \sum_{j=1}^{r_1+r_2} \ell_j(\bchi) = 2m$.

All terms in the sum are positive, so we can get a lower bound by restricting the sum to certain tuples of characters and zeroes. In particular, we consider the sum over characters restricted to those $\bchi=(\chi_1,\dots ,\chi_{2m})$ that are elements of $\Irr (G)^{2m}$ and $(\gamma_{\chi_1},\dots,\gamma_{\chi_{2m}})$ which appear in conjugate pairs; that is, for which 
 for any  $j\leq r_1+r_2$ and $ \gamma \in \mathbb R$ we have
$$
\big|\big\{ k\in E_j(\bchi) \colon \chi_k\in\{\psi_j,\overline{\psi_j}\}, \gamma_{\chi_k}= \gamma \big\}\big| =\big|\big\{ k\in E_j(\bchi) \colon \chi_k\in\{\psi_j,\overline{\psi_j}\}, \gamma_{\chi_k}= -\gamma \big\}\big|.$$
Finally, we may further impose that $k_j(\bchi):=\tfrac12\ell_j(\bchi)  \in \mathbb N$, and restrict the sum over characters to the subset $\Irr_{2m}$ of vectors of characters $\bchi = (\chi_1,\dots,\chi_{2m}) \in  \Irr (G)^{2m}$ for which for every $r_1+1\leq j\leq r_1+r_2 $, $|\{ \ell \leq 2m : \chi_\ell = \psi_j\}|=|\{ \ell  \leq 2m: \chi_\ell = \overline{\psi_j}\}|$.

Now stratify the outside sum according to the values assumed by 
$k_j(\bchi)$. Given an $(r_1+r_2)$-tuple $\k=(k_1,\dots,k_{r_1+r_2}) \in \mathbb N^{r_1+r_2}$ such that $k_1+\dots+k_{r_1+r_2}=m$, it remains to evaluate the sum
$$
D(\k  ):=  \sum_{ \substack{\bchi=(\chi_1,...,\chi_{2m}) \in   \Irr(G)^{2m}\\ \forall j,\,  k_j(\bchi) =k_j  }} \Big(\prod_{j=1}^{2m}  \chi_j(1)\Big)\sum_{\substack{\gamma_{\chi_1},...,\gamma_{\chi_{2m}}\neq 0 \\ \forall j\leq r_1+r_2,\forall \gamma\in \R,\\ \# \{ k\in E_j(\bchi) : \chi_k{  \in\{\psi_j,\overline{\psi_j}\}}, \gamma_{\chi_k}= \gamma \} = \\ \# \{ k\in E_j(\bchi) : \chi_k{\in\{\psi_j,\overline{\psi_j}\}}, \gamma_{\chi_k}= -\gamma \}  }}  \prod_{j=1}^{2m}\widehat \eta\Big(\frac{\delta\gamma_{\chi_j}}{2\pi}\Big).
$$ 
After reindexing, $D(\k)$ becomes
\begin{equation}\label{Dk} 
 D(\k  )= \binom{2m}{2k_1,\dots,2k_{r_1+r_2}} \prod_{j=1}^{r_1+r_2}\big(  \psi_j(1)^{2k_j}\sigma_j(k_j,\delta)\big)
\end{equation}
with
$$\sigma_j(k_j,\delta):=\sum_{\substack{\gamma_{\chi_1},...,\gamma_{\chi_{2k_j}}\neq 0 \\ \forall \gamma\in \R,\\ \# \{ k\leq 2k_j  :  
\gamma_{\chi_k}= \gamma \} = \\ \# \{ k\leq 2k_j :  
\gamma_{\chi_k}= -\gamma \}  }} \!\!         \prod_{k=1}^{2k_j}\widehat \eta\Big(\frac{\delta\gamma_{\chi_k}}{2\pi}\Big)
=\binom{2k_j}{k_j}\sum_{\substack{\gamma_{\chi_1},\ldots,\gamma_{\chi_{k_j}}>0\\ \gamma_{\chi'_{1}},\ldots,\gamma_{\chi'_{ k_j}}<0\\ \forall \gamma\in \R_{>0},\\ \# \{ k\leq  k_j  :  
\gamma_{\chi_k}= \gamma \} = \\ \# \{ k\leq  k_j :  
\gamma_{\chi'_k}= -\gamma \}  }} \!\!         \prod_{k=1}^{ k_j}\widehat \eta\Big(\frac{\delta\gamma_{\chi_k}}{2\pi}\Big)\widehat \eta\Big(\frac{\delta\gamma_{\chi'_k}}{2\pi}\Big).$$

For~$j\geq r_1+1$ (i.e. $\psi_j$ is unitary), applying Lemma~\ref{lemma pre combinatoire} shows that
$$\sigma_j(k_j,\delta)
\geq  2^{k_j}\mu_{2k_j}  b_0(\psi_j;|\widehat \eta|^2,\delta)^{k_j}\max\Big\{ 1 -C_\eta \frac{{k_j}!{k_j}({k_j}-1)}{ b_0(\psi_j;|\widehat \eta|^2,\delta)}  ,0\Big\},
$$
since
$$\binom{2k_j}{k_j}k_j!=2^{k_j}\mu_{2k_j}.$$

If instead $j \le r_1$ (i.e. $\psi_j$ is either orthogonal or symplectic), then we may fix the sign of the imaginary parts $\gamma_{\chi_j}$ and deduce that
$$ \sigma_j(k_j)=\binom{2k_j}{k_j} \sum_{\substack{\gamma_{1},\ldots,\gamma_{k_j}>0\\ \gamma'_{1} ,\dots ,\gamma'_{k_j}< 0 \\  \forall \gamma\in \R,\\ \# \{ k\leq k_j  : \gamma_{k}= \gamma \} = \\ \# \{  k\leq  k_j : \gamma'_{k}= -\gamma \}  }}  \prod_{k=1}^{ k_j}\widehat \eta\Big(\frac{\delta\gamma_{\chi_k}}{2\pi}\Big)\widehat \eta\Big(\frac{\delta\gamma_{\chi'_k}}{2\pi}\Big).
$$
Applying Lemma~\ref{lemma pre combinatoire} once more yields the bound
$$\sigma_j(k_j)
\geq  \mu_{2k_j}b_0(\psi_j;|\widehat \eta|^2,\delta)^{k_j}\max\Big\{ 1 -C_\eta\ \frac{{k_j}!{k_j}({k_j}-1)}{ b_0(\psi_j;|\widehat \eta|^2,\delta)}  ,0\Big\}.
$$
We continue to follow proof of lemma 5.5 of \cite{dfjouve-general-class-fcns} reporting these estimates in \eqref{Dk} and using
$$
 \prod_{\ell=1}^{r_1+r_2} \max\big\{ 1-x_\ell    ,0\big\}
 \geq 1-\sum_{j=1}^{r_1+r_2}  x_j \qquad (x_\ell \geq 0).  $$ 
The main term is  equal to
$\mu_{2m}   \nu_2(K;\eta,\delta)^{m} , $
with $\nu_2(K;\eta,\delta)$ defined in \eqref{defnuj}
and the error term is
\begin{align*}
&\ll {m^2}m!\mu_{2m}  \Big(\sum_{j =1}^{r_1+r_2}  \psi_j(1)^4b_0(\psi_j;|\widehat \eta|^2,\delta)  \Big) \Big( \sum_{j=1}^{r_1}  \psi_j(1)^2 b_0(\psi_j;|\widehat \eta|^2,\delta) + 2\sum_{j =r_1}^{r_1+r_2}  \psi_j(1)^2 b_0(\psi_j;|\widehat \eta|^2,\delta) \Big)^{m-2} \\
&\ll\mu_{2m} \nu_2(K;\eta,\delta)^{m-2} {m^2}m! \nu_4(K;\eta,\delta)
. \end{align*}
with 
$  
\nu_4(K;\eta,\delta)$ defined in \eqref{defnuj}.
Then we have
$$S_{2m}\geq \mu_{2m}\nu_2(K;\eta,\delta)^m \Big(1+O\Big(m^2 m!\frac{\nu_4(K;\eta,\delta)}{  \nu_2(K;\eta,\delta)^2} \Big)\Big).$$
\end{proof}

\bibliographystyle{amsplain}
\bibliography{bibliography}

\end{document}